\DeclareMathOperator{\Hom}{Hom}
\DeclareMathOperator{\codim}{codim}
\DeclareMathOperator{\Tr}{Tr}
\DeclareMathOperator{\val}{val}
\DeclareMathOperator{\sign}{sgn}
\DeclareMathOperator{\trop}{trop}
\DeclareMathOperator{\prin}{prin}
\DeclareMathOperator{\out}{out}
\DeclareMathOperator{\In}{in}
\DeclareMathOperator{\Mult}{Mult}
\DeclareMathOperator{\Aut}{Aut}
\DeclareMathOperator{\Ends}{Ends}
\DeclareMathOperator{\scat}{Scat}
\DeclareMathOperator{\Joints}{Joints}
\DeclareMathOperator{\Supp}{Supp}
\DeclareMathOperator{\Li}{Li}
\DeclareMathOperator{\as}{as}
\DeclareMathOperator{\N}{N}
\DeclareMathOperator{\Parents}{Parents}
\DeclareMathOperator{\Ancestors}{Ancestors}
\DeclareMathOperator{\Leaves}{Leaves}
\let\llb=\llbracket
\let\rrb=\rrbracket
\let\bb=\mathbb
\let\rar=\rightarrow
\let\f=\mathfrak
\let\s=\mathcal
\let\wh=\widehat
\let\wt=\widetilde
\newcommand {\A} {{\bf B}}
\newcommand {\ww} {{\bf w}}
\newcommand {\pp} {{\bf p}}
\newcommand {\JJ} {{\bf J}}
\theoremstyle{plain}
 \newtheorem{thm}{Theorem}[section]
 \newtheorem{lem}[thm]{Lemma}
  \newtheorem{prop}[thm]{Proposition}
   \newtheorem{cor}[thm]{Corollary}
\theoremstyle{definition}
 \newtheorem{dfn}[thm]{Definition}
 \newtheorem{ntn}[thm]{Notation}
 \newtheorem{eg}[thm]{Example}
 \newtheorem{egs}[thm]{Examples}
\theoremstyle{remark} 
 \newtheorem{rmk}[thm]{Remark}
  \newtheorem{asss}[thm]{Assumptions}
\title{Scattering diagrams, theta functions, and refined tropical curve counts}
\author{Travis Mandel}
\address{School of Mathematics\\
University of Edinburgh\\
Edinburgh EH9 3FD\\
UK}
\email{Travis.Mandel{\char'100}ed.ac.uk}
\thanks{The author was supported by the Center of Excellence Grant ``Centre for Quantum Geometry of Moduli Spaces'' from the Danish National Research Foundation (DNRF95), and later by the National Science Foundation RTG Grant DMS-1246989, and then by the Starter Grant ``Categorified Donaldson-Thomas Theory'' no. 759967 of the European Research Council. \\ 
2010 Mathematics Subject Classification 14T05 (primary), 13F60, 14N10 (secondary).}
\begin{document}

\begin{abstract}
Working over various graded Lie algebras and in arbitrary dimension, we express scattering diagrams and theta functions in terms of counts of tropical curves/disks, weighted by multiplicities given in terms of iterated Lie brackets.  Over the tropical vertex group, our tropical curve counts are known to give certain descendant log Gromov-Witten invariants.   Working over the quantum torus algebra yields theta functions for quantum cluster varieties, and our tropical description sets up for geometric interpretations of these.  As an immediate application, we prove the quantum Frobenius conjecture of \cite{FG1}.  We also prove a refined version of the \cite{CPS} result on consistency of theta functions, and we prove the non-degeneracy of the trace-pairing for the Gross-Hacking-Keel Frobenius structure conjecture.
\end{abstract}

\maketitle

\setcounter{tocdepth}{1}
\tableofcontents

\section{Introduction}\label{Intro}
Mirror symmetry predicts a close relationship between certain counts of holomorphic disks in one space and the data of certain sections of vector bundles on a mirror space.  Motivated by this, various combinations of Gross, Hacking, Keel, Kontsevich, and Siebert \cite{CPS,GHK1,GHKK,GHS} have defined canonical ``theta functions'' in terms of combinatorial objects called scattering diagrams and broken lines which, at least heuristically, capture the data of mirror holomorphic disk counts.  In this article, building off ideas from \cite{GPS}, we show how the scattering diagrams and theta functions (along with certain refinements!) can be expressed in terms of certain counts of tropical curves and tropical disks, cf. Corollary \ref{ScatTropDisksCor} and Theorem \ref{MainTheorem}.  The original motivation is that these tropical curve counts can be related to holomorphic curve counts \cite{Mi,NS,AGr,MRud}, and this should lead to proofs of the expected mirror symmetry correspondences.  Indeed, separate work of the author \cite{ManFrob} uses this to prove that the Frobenius structure conjecture of Gross-Hacking-Keel \cite[arXiv v1, \S 0.4]{GHK1} holds for cluster varieties.

The non-refined (i.e., classical) versions of scattering diagrams and theta functions mentioned above are defined over the module of log derivations (as in \cite{GPS}), or for cluster varieties, a sub Lie algebra called the Poisson torus algebra.   But, as described in \cite{WCS}, scattering diagrams can be defined over other monoid-graded Lie algebras, e.g., the quantum torus algebra.  We extend the construction of theta functions to this more general context, and we show that the descriptions of the scattering diagrams and theta functions in terms of tropical counts apply in this general setup.  Here, the tropical curves are counted using a new version of tropical multiplicities defined in terms of iterated brackets in the Lie algebra, cf. \S \ref{MultSection}.

In the classical setting, the tropical description of the scattering diagram in dimension $2$ is \cite[Thm. 2.8]{GPS}, while a version for the higher-dimensional cases appears as part of \cite[Prop. 5.14]{CPS}.  Similarly, \cite[Prop. 5.15]{CPS} gives a description of certain classical broken lines in terms of a version of tropical disk counts.  Joint work of the author and H. Ruddat \cite{MRudMult} proves that the iterated bracket description of multiplicities introduced in \S \ref{MultSection} does in fact yield the correct multiplicities for relating the tropical curve counts here to (descendant) log Gromov-Witten invariants.

When working in dimension $2$ over the quantum torus algebra, our multiplicities are essentially genus $0$ Block-G\"ottsche multiplicities, extended to allow for some $\psi$-class conditions (the refined descendant multiplicities of \cite{BlSh} are a symmetrization of ours).  Various Block-G\"ottsche invariants (with slightly different conditions imposed than in our setup) have been interpreted in terms of refined Severi degrees \cite{BG,GSh}, and a motivic interpretation has been investigated in \cite{NPS}.

The description of the two-dimensional scattering diagrams in terms of Block-G\"ottsche invariants was previously found in \cite[Corollary 4.8]{FS}, where they noted a relationship to Poincar\'e polynomials of certain moduli of quiver representations (refining the results of \cite{GP} in terms of the corresponding Euler characteristics).  It follows from the ideas of \cite{Bridge} that this Poincar\'e polynomial description holds in higher dimensions as well.  This is explained in joint work of the author with M.-W. Cheung \cite{CM}, where we will also express the tropical multiplicities appearing here in terms of certain moduli of composition series.  New Donaldson-Thomas/Gromov-Witten correspondence theorems (and quantum refinements) will follow from comparing these equivalent descriptions of scattering diagrams.

Alternatively, these Block-G\"ottsche invariants for two-dimensional scattering diagrams have been related to certain higher-genus invariants with lambda classes by Bousseau \cite{Bou}, and to counts of real curves by Mikhalkin \cite{Mikq}.  Bousseau has used his correspondence result and the tropical description of two-dimensional quantum scattering diagrams to express these scattering diagrams, hence the mirror construction of \cite{GHK1}, in terms of these higher-genus invariants, cf. \cite{Bou2,Bou3}. Future work of the author with Bousseau will use our results to prove a refined version of the Frobenius structure conjecture in dimension $2$, relating quantum theta functions to these higher-genus invariants.  Other upcoming work of the author will extend Mikhalkin's ideas to more general conditions and higher dimensions.  This will relate our quantum tropical counts (those appearing in the quantum torus algebra cases of Corollary \ref{ScatTropDisksCor} and Theorem \ref{MainTheorem}) to certain signed counts of holomorphic disks with boundary on the positive real locus, the power of the quantum parameter $q$ giving certain areas of the disks.  The goal here is a quantum version of the Frobenius structure conjecture which enumerates open strings in the presence of a $B$-field.

As outlined above, the primary and motivating value of our main results is that the expressions in terms of tropical curve counts will lead to nice geometric interpretations.  That said, we do present two direct applications of the tropical description to understanding properties of the theta functions.  First, in \S \ref{CPSproof}, we sketch how the tropical interpretation yields a proof of Theorem \ref{CPS}, which extends to our refined setting a foundational result of Carl-Pumperla-Siebert \cite{CPS} on the global well-definedness of theta functions. Then in \S \ref{Application}, we use the tropical description to prove Fock and Goncharov's quantum Frobenius conjecture \cite[Conj. 4.8.6]{FG1} (after proving their classical Frobenius conjecture via other techniques).\footnote{Fock and Goncharov's Frobenius conjectures should not be confused with the quite different Frobenius \textit{structure} conjecture of Gross-Hacking-Keel.}  This conjecture describes the behavior of quantum theta functions at roots of unity under the action of the quantum Frobenius map.\footnote{For quantum cluster varieties from surfaces, the quantum Frobenius conjecture is \cite[Thm. 1.2.6]{AK}, although the canonical bases there are not yet known to equal to theta bases.}

\subsection{Outline of the paper}

In \S \ref{scatter} we cover the basic definitions and properties of scattering diagrams, and in \S \ref{Broken Lines} we define our general version of broken lines and theta functions.  In \S \ref{TrNonDeg}, we prove that, under certain mild assumptions, the multiplication of the theta functions is determined by a certain trace operator which maps functions to their degree $0$ terms.  This is \textbf{Theorem \ref{TrThm}}, and it is an important step in proving the Frobenius structure conjecture in \cite{ManFrob}.  Degree $0$ terms in products of theta functions (i.e., the traces) can be understood in terms of tropical curves, as opposed to just tropical disks, so the correspondence to holomorphic curves is far better understood (although it should be possible to express the tropical disk counts in terms of the punctured invariants of \cite{GSInt}).

In \S \ref{TropBasics}, we first give the basic definitions of tropical curves and tropical disks, and we describe the types of conditions we will impose on these objects.  Our definitions of the multiplicities of our tropical curves/disks are given in \S \ref{MultSection}, with simplifications for the various setups discussed in Remark \ref{SignSkewCases} and Examples \ref{MultExamples}.  In particular, our version of the Block-G\"ottsche multiplicities is explained in Example \ref{MultExamples}(iii).  The invariance of the resulting tropical counts is \textbf{Proposition \ref{TropInvariant}}.  Our main results relating scattering diagrams to tropical curve counts are \textbf{Theorem \ref{ScatTropDisks}} and \textbf{Corollary \ref{ScatTropDisksCor}}, and then the description of theta functions in terms of tropical curve counts is \textbf{Theorem \ref{MainTheorem}}. We prove these results and the refined \cite{CPS} result (\textbf{Theorem \ref{CPS}}) in \S \ref{MainProofs}. 

In \S \ref{ClusterSection} we focus on the setup relevant to cluster varieties.  We review the basic definitions of seeds in \S \ref{SeedsSection}, and in \S \ref{InClusterScat} we give the initial scattering diagrams associated to seeds when constructing theta functions on the corresponding cluster varieties and their quantizations.  Finally, in \S \ref{Application}, we prove the classical and quantum Fock-Goncharov Frobenius Conjectures (\textbf{Theorems \ref{FrobConjClass} and \ref{FrobConjQuant}}, respectively).

\subsection*{Acknowledgements}
The author thanks Tom Bridgeland, Man-Wai Cheung, Mark Gross, Sean Keel, Greg Muller, Helge Ruddat, Bernd Siebert, and Jacopo Stoppa for helpful discussions.

\section{Scattering diagrams and theta functions}\label{CanBroken}

\subsubsection*{Notation}
Given a finite-rank lattice $L$, we write $L_{\bb{Q}}:=L\otimes \bb{Q}$ and $L_{\bb{R}} := L\otimes \bb{R}$.  We denote the dual pairing between $L$ and $\Hom(L,\bb{Z})$ by $\langle \cdot,\cdot\rangle$. 

\subsection{Scattering diagrams}\label{scatter}

Let $N$ denote a lattice of finite rank $r$, and let $M$ denote the dual lattice $\Hom(N,\bb{Z})$.  Fix a strictly convex rational polyhedral cone $\sigma_{N^{\oplus}}\subset N_{\bb{R}}$.  Let $N^{\oplus}:=\sigma_{N^{\oplus}}\cap N$, and let $N^+:=N^{\oplus}\setminus \{0\}$.  Let $\f{g}:=\bigoplus_{n\in N^+} \f{g}_n$ be a Lie algebra graded by $N^+$, meaning that $[\f{g}_{n_1},\f{g}_{n_2}] \subseteq \f{g}_{n_1+n_2}$.

For each $k\in \bb{Z}_{\geq 1}$, let 
\begin{align}\label{kNplus}
kN^+:=\{n_1+\ldots+n_k\in N|n_i\in N^+ \mbox{ for each } i=1,\ldots,k\}.
\end{align} 
Let $$\f{g}^{\geq k}:=\bigoplus_{n\in kN^+} \f{g}_n.$$  Note that $\f{g}^{\geq k}$ is a Lie subalgebra of $\f{g}$.  Let $\f{g}_k$ denote the nilpotent Lie algebra $\f{g}/\f{g}^{\geq k}$, and let $\wh{\f{g}}:=\varprojlim \f{g}_k$.  We have corresponding Lie groups $G:=\exp \f{g}$, $G_k:=\exp \f{g}_k$, and $\wh{G}:=\exp \wh{\f{g}} = \varprojlim G_k$.

Let $\sigma_P$ denote a convex (but not necessarily strictly convex) rational polyhedral cone in $N_{\bb{R}}$ containing $\sigma_{N^{\oplus}}$, and let $P:=\sigma_P\cap N$.  Let $A$ be a $P$-graded algebra over $A_0$ (the degree $0$ part) on which $\f{g}$ acts via $A_0$-derivations respecting the grading, so $\f{g}_n \cdot A_p \subset A_{n+p}$.  It will occasionally be useful to think of $\f{g}\oplus A$ as a Lie algebra under the bracket 
\begin{align}\label{Ag}
    [g_1+a_1,g_2+a_2]=[g_1,g_2]+g_1\cdot a_2 - g_2\cdot a_2 +[a_1,a_2].
\end{align}
Let $A^{\oplus}$ denote the subring $\bigoplus_{n\in N^{\oplus}} A_n$ of $A$, and for each $k\in \bb{Z}_{\geq 1}$, let $A^{\geq k}$ denote the ideal $\bigoplus_{n\in kN^+} A_n$ of $A^{\oplus}$.  Then let $\wh{A}^{\oplus}:=\varprojlim_k (A^{\oplus}/A^{\geq k})$, and let $\wh{A}:=A\otimes_{A^{\oplus}} \wh{A}^{\oplus}$.  I.e., $\wh{A}$ is the $N^+$-adic completion of $A$.  Then $\wh{G}$ inherits an action on $\wh{A}$, $\wh{G}\rar \Aut(\wh{A})$. 

For any sublattice $L\subset N$ or subspace $L\subset N_{\bb{R}}$, let $\f{g}_L:=\bigoplus_{n\in L\cap N^+} \f{g}_n$ be the corresponding sub Lie algebra of $\f{g}$, and let $\wh{\f{g}}_L$ denote the $(L\cap N^+)$-adic completion.  Similarly, define $A_L:= \bigoplus_{n\in L\cap P} A_n$ and let $\wh{A}_L$ be its $(L\cap N^+)$-adic completion.

Fix a saturated sublattice $K\subset N$ such that $[\f{g},\f{g}_K]=0$ and $\f{g}\cdot A_K=0$, i.e., such that $\f{g}_K$ is central in $\f{g}$, and the action of $\f{g}$ on $A$ is via $A_K$-derivations.\footnote{The reader can safely take $K=0$ (so $N=\?{N}$) and ignore this extra generality, but in certain applications it is useful to view $A_K$ as the coefficient ring for $A$.  Similarly, a reader who is interested only in the scattering diagram, not in broken lines and theta functions, may take $A=0$ and $P=N$, so then all conditions on $A$ and $P$ become trivial.}  Let
\begin{align*}
    \pi_K:N\rar \?{N}:=N/K
\end{align*}
denote the projection, and let $\?{M}$ be the dual lattice to $\?{N}$, canonically identified with $K^{\perp}\cap M\subset M$.

We assume that $P+K$ is a lattice, i.e., that $\?{P}:=\pi_K(P)$ is a lattice, and we fix a piecewise-linear section $\varphi:\?{P}\rar P$ of $\pi_K|_P$ such that $P=\varphi(\?{P})+(K\cap P)$.  For each $p\in \?{P}$, we designate a special element $z^{\varphi(p)}\in A_{\varphi(p)}$.  We assume $\varphi(0)=0$, and $z^0=1$.  In our examples it will typically be obvious from the notation what these elements $z^{\varphi(p)}$ are.

For each $n \in N^+$, we have a Lie subalgebra $$\f{g}_{n}^{\parallel}:= \wh{\f{g}}_{\bb{Z} n} \subset \wh{\f{g}}.$$  For $n\in N^+$ and $m\in n^{\perp}\setminus \{0\}$, let $\f{g}_{n,m}^{\parallel}$ denote the sub Lie algebra of $\f{g}_n^{\parallel}$ consisting of those $g$ such that $[g,\f{g}_{m^{\perp}}]=0$ and $g\cdot A_{m^{\perp}}=0$.  For each $(n_1,m_1)$ and $(n_2,m_2)$ with $n_i\in N^+$ and nonzero $m_i\in n_i^{\perp}$ for $i=1,2$, we require that
\begin{align}\label{gnm}
    [\f{g}_{n_1,m_1}^{\parallel},\f{g}_{n_2,m_2}^{\parallel}]\subset \f{g}_{n_1+n_2,\mu((n_1,m_1),(n_2,m_2))}^{\parallel},
\end{align}
where
\begin{align}\label{m}
    \mu((n_1,m_1),(n_2,m_2)):=\langle n_2,m_1\rangle m_2-\langle n_1,m_2\rangle m_1 \in (n_1+n_2)^{\perp}.
\end{align}
This setup is motivated by the following examples, which will be built upon throughout the paper.

\begin{egs}\label{gegs} For use in these examples, we fix a commutative ring $R$.

\begin{enumerate}[label=(\roman*)]\setlength\itemsep{1em}
\item {\bf The tropical vertex group:} Let $\Theta_K(R[N^{\oplus}])$ denote the module of log derivations of $R[N^{\oplus}]$ over $R[K\cap N^{\oplus}]$:
\begin{align*}
    \Theta_K(R[N^{\oplus}]):=R[N^{\oplus}]\otimes_{\bb{Z}} \?{M} 
\end{align*}
with action on $R[N^{\oplus}]$ defined by
\begin{align*}
    f\otimes m(z^n):=f\langle n,m\rangle z^n.
\end{align*}
We write $f\otimes m$ as $f\partial_m$.  $\Theta_K(R[N^{\oplus}])$ forms a Lie algebra with bracket $[a,b]:=ab-ba$, where multiplication means composition of derivations.  In particular, one computes
\begin{align}\label{hCommutator}
    [z^{n_1}\partial_{m_1},z^{n_2}\partial_{m_2}]=z^{n_1+n_2}\partial_{\mu((n_1,m_1),(n_2,m_2))}
\end{align}
for $\mu$ as defined in \eqref{m}. 
Let
\begin{align*}
    \f{h}:=\bigoplus_{n\in N^+} \f{h}_n,
\end{align*}
where $\f{h}_n$ is the submodule of $\Theta_K(R[N^{\oplus}])$ spanned by elements of the form $z^n\partial_m$ with $\langle n,m\rangle = 0$.  One easily checks that $\f{h}$ is closed under the bracket and hence is a Lie subalgebra, clearly graded by $N^+$.  We take $\f{g}:=\f{h}$.  The corresponding pronilpotent group $\wh{G}=\wh{H}$ constructed from this $\f{g}$ as above is called the tropical vertex group.

For the algebra $A$, we take $A:=R[P]$, so $\wh{A}=:R\llb N^{\oplus} \rrb_P$ is the corresponding Laurent series ring.  One checks that an element of the form $\exp(\log(f)\partial_m)\in \wh{G}$ acts on a monomial $z^p\in \wh{A}$ via 
 \[\exp(\log(f)\partial_m)\cdot z^p = z^pf^{\langle p,m\rangle}.
 \]
 Note that $\f{g}_{n,m}^{\parallel}$ is generated by $z^n\partial_m$.  Condition \eqref{gnm} now follows from \eqref{hCommutator}.
 
\item {\bf Poisson torus algebras:}  This is a special case of the tropical vertex group example and is particularly important for cluster algebras.  For this and Example (iii) below, we assume $N$ is equipped with a $\bb{Q}$-valued skew-symmetric form $\omega=\{\cdot,\cdot\}$.  Each $\f{g}$ will be skew-symmetric with respect to $\omega$ in the sense that if $\{n_1,n_2\}=0$, then $[\f{g}_{n_1},\f{g}_{n_2}]=0$.  Similarly, the actions on $A$ will be skew-symmetric, meaning that $\f{g}_{n_1}\cdot A_{n_2}=0$ whenever $\{n_1,n_2\}=0$.  Note that these skew-symmetry conditions imply that $\f{g}_n^{\parallel}=\f{g}_{n,\{n,\cdot\}}^{\parallel}$, and that Condition \eqref{gnm} also follows.

For simplicity, we also assume in this example that either $\{\cdot,\cdot\}$ is $\bb{Z}$-valued, or that $R$ contains a copy of $\bb{Q}$ (in which case we identify $\Theta_K(R[N^{\oplus}])$ with $R[N^{\oplus}]\otimes_{\bb{Q}} \?{M}_{\bb{Q}}$).  We define a map $\omega_1:N\rar M_{\bb{Q}}$ via $\omega_1(n)=\{n,\cdot\}$.  A natural choice for $K$ in this and the next example is $K:=\ker(\omega_1)$.

Now, let $\f{h}$ be the Lie algebra of the previous example.  The elements of the form
\begin{align}\label{PoissonTorusElements}
    z^{n} \partial_{\omega_1(n)}
\end{align} generate a Lie subalgebra $\f{g}^{\omega}\subset \f{h}$ which we take as our $\f{g}$.  We denote the corresponding prounipotent  Lie group $\wh{G}$ by $\wh{G}^{\omega}$.  $A$ and $\wh{A}$ are as before, and the action of $\f{g}^{\omega}$ on them is via restriction from that of $\f{h}$.

With this setup, $\wh{A}=R\llb N^{\oplus} \rrb_P$ forms a Poisson algebra with Poisson bracket defined by \begin{align}\label{PoissonBracket}
[z^{p_1},z^{p_2}]:=\{p_1,p_2\}z^{p_1+p_2}.
\end{align}
Then $\iota:z^{e_i}\partial_{\omega_1(n)} \mapsto z^{e_i}$ identifies $\f{g}^{\omega}$ (respectively, $\wh{\f{g}}^{\omega}$) with the $R$-span (respectively, the topological $R$-span\footnote{By the topological $R$-span of a set $S$ in the $N^+$-adic completion $\wh{A}$ of $A$, we mean the set of all possibly-infinite sums of elements in $S$ with coefficients in $R$ such that, for each $k>0$, all but finitely many terms vanish modulo $A^{\geq k}$.}) of the elements $z^n\in R\llb N^{\oplus} \rrb_P$ with $n\in N^+$, the Lie bracket being identified with the Poisson bracket.  The action of $\f{g}^{\omega}$ on $A$ is then just the restriction to $\f{g}^{\omega}$ of the adjoint action of $A$ on itself (with the Poisson bracket as the Lie bracket), and similarly for the action of $\wh{\f{g}}^{\omega}$ on $\wh{A}$.

\item {\bf Quantum torus algebras:} The previous example admits a quantization (important for quantum cluster algebras) as follows: Fix some $D\in \bb{Q}_{>0}$ such that $D\{\cdot,\cdot\}$ is $\bb{Z}$-valued.  For any $a \in \frac{1}{D}\bb{Z}_{\geq 0}$, we have a corresponding ``quantum number''
\[[a]_q:=q^a-q^{-a}\in R[q^{\pm 1/D}].
\]
Note that $\lim_{q^{1/D}\rar 1} \frac{[a]_q}{q-q^{-1}}=a$. 
Define $R_q\subset R(q^{1/D})$ by adjoining $[a]_q^{-1}$ to $R[q^{\pm 1/D}]$ for each $a\in \frac{1}{D}\bb{Z}_{> 0}$.

Now, let $A=R_q[P]$ be the quantum torus algebra:
\[R_q[P]:=R_q[z^p|p\in P]/\langle z^{p_1}z^{p_2}=q^{\{p_1,p_2\}}z^{p_1+p_2}\rangle.\] 
The $N^+$-adic completion is $\wh{A}=:R_q\llb N^{\oplus}\rrb_P$. 

$R_q[P]$ forms a Lie algebra under the usual commutator, which one easily checks is given by
\begin{align*}
 [z^{p_1},z^{p_2}]=[\{p_1,p_2\}]_q z^{p_1+p_2}.
\end{align*}
We take $\f{g}=\f{g}^{\omega}_q$ to be the sub-Lie algebra $R_q[N^+]$, spanned over $R_q$ by $z^n$ with $n\in N^+$.  The action of $\f{g}^{\omega}_q$ on $A$ is just the restriction of the adjoint action.  One checks that this specializes\footnote{Technically, making $\f{g}_q^{\omega}\rar \f{g}^{\omega}$ into a well-defined Lie algebra homomorphism requires more care with the coefficients in $\f{g}_q^{\omega}$.  In \S \ref{Application} we will use the classical limit map for $A$, but not for $\f{g}$, so we do not take the time to make this precise.} to the previous example in the $q^{1/D}\mapsto 1$ limit, taking $z^p\mapsto z^p$ for $A$ and $\frac{z^n}{q-q^{-1}}\mapsto z^n$ for $\f{g}$.

The $N^+$-adic completion of $\f{g}$ is $\wh{g}^{\omega}_q = R_q\llb N^{\oplus}\rrb$, and exponentiation yields $\wh{G}:=\wh{G}^{\omega}_q$ in the multiplicative group of $R_q\llb N^{\oplus}\rrb\subset \wh{A}$.  The action of this $\wh{G}$ on $\wh{A}$ is then via conjugation, $g\cdot a = gag^{-1}$.

\item {\bf Hall algebras:} The Hall algebra scattering diagrams of \cite{Bridge} provide additional interesting examples which further refine the Poisson and quantum torus algebra examples above.  However, the Hall algebra does not satisfy the condition of \eqref{gnm}.  To apply the results of this paper then, including the crucial refined \cite{CPS} result (Theorem \ref{CPS}), one must mod out by an ideal in order to obtain a skew-symmetric Lie algebra.\footnote{Actually, our proof of Theorem \ref{MainTheorem} does not use \eqref{gnm} and so applies more generally, but without Theorem \ref{CPS}, theta functions become less meaningful.}   This setup will not be further discussed here, but it is investigated in \cite{CM}.
\end{enumerate}
\end{egs}

\begin{dfn}\label{WallDef}
For the above data, a wall in $N_{\bb{R}}$ over $\f{g}$ is a triple $(m_{\f{d}},\f{d},g_{\f{d}})$ such that:
\begin{itemize}
\item $m_{\f{d}}$ is an element of $\?{M}$ (which we recall is identified with $K^{\perp}\cap M$), determined up to positive scaling (we could require $m_{\f{d}}$ to be primitive, but it will often be convenient to allow it to be non-primitive).
\item $\f{d}$ is a closed, convex (but not necessarily strictly convex), rational-polyhedral, codimension-one affine cone in $N_{\bb{R}}$ which is parallel to $m_{\f{d}}^{\perp}$.  This is called the support of the wall.
\item $g_{\f{d}} \in \f{g}_{n_{\f{d}},m_{\f{d}}}^{\parallel}$ for some primitive $n_{\f{d}}\in m_{\f{d}}^{\perp}\cap N$.  $-n_{\f{d}}$ is called the  direction of the wall.
\end{itemize}
A scattering diagram $\f{D}$ over $\f{g}$ is a set of walls over $\f{g}$ such that for each $k >0$, there are only finitely many $(m_{\f{d}},\f{d},g_{\f{d}})\in \f{D}$ with $g_{\f{d}}$ not projecting to $0$ in $\f{g}_k$. 
\end{dfn}
A wall with direction $-n_{\f{d}}$ is called incoming if it is closed under addition by $n_{\f{d}}$.  Otherwise, the wall is called outgoing.  Note that, given $\f{d}$, the additional data of $m_{\f{d}}$ is equivalent to choosing a side of $\f{d}$ to be the positive side of the wall (i.e., the side where $m_{\f{d}}$ is positive).

We will sometimes denote a wall $(m_{\f{d}},\f{d},g_{\f{d}})$ by just $\f{d}$.  Denote $\Supp(\f{D}):= \bigcup_{\f{d}\in \f{D}} \f{d}$, and \begin{align*}
\Joints(\f{D}):= \bigcup_{\f{d}\in \f{D}} \partial \f{d} \cup \bigcup_{\substack{\f{d}_1,\f{d}_2\in \f{D}\\
		               \dim \f{d}_1\cap \f{d}_2 = r-2}} \f{d}_1\cap \f{d}_2. 
\end{align*}

\begin{rmk}[Other conventions]\label{RmkScatBar}
We briefly discuss how our definition of a scattering diagram relates to other definitions which have appeared in the literature.
\begin{enumerate}[label=(\roman*)]
    \item In practice, walls of scattering diagrams are closed under addition by $K_{\bb{R}}$.  Thus, it is reasonable (though more notationally cumbersome for our purposes here) to view the scattering diagram as living in $\?{N}_{\bb{R}}$, replacing each $\f{d}$ above with $\pi_K(\f{d})$ and viewing $m_{\f{d}}^{\perp}$ as living in $\?{N}_{\bb{R}}$ instead of $N_{\bb{R}}$.  This is essentially the approach implicitly used in \cite{GPS} and \cite{GHK1}. 
    The modifications for this viewpoint are fairly straightforward: The direction of a wall is then $-\pi_K(n_{\f{d}})$ instead of $-n_{\f{d}}$, and incoming walls are then closed under addition by $\pi_K(n_{\f{d}})$.  In the definition of broken lines in Def. \ref{broken line} below, the only modification is that $Q$ should live in $\?{N}_{\bb{R}}$ instead of $N_{\bb{R}}$, and $\gamma'(t)$ should be $-\pi_K(v_i)$ in place of $-v_i$.  Similarly, when using this viewpoint, our counts of tropical curves and tropical disks in $N_{\bb{R}}$ can be replaced with the analogous counts in $\?{N}_{\bb{R}}$ obtained by applying $\pi_K$ to each value of the tropical degree and to each incidence condition.
    \item In some setups, e.g., the Hall algebra setup of \cite{Bridge}, it is more natural to view the walls of the scattering diagram as living in $M_{\bb{R}}$, with $\f{d}$ being parallel to $n_{\f{d}}^{\perp}$.  These cases come with a skew-symmetric form $\{\cdot,\cdot\}$ on $N$ and a map $\omega_1:N\rar M$ as mentioned in Example \ref{gegs}(ii), and broken lines have $\gamma'(t)=-\omega_1(v_i)$ in place of $-v_i$.  These scattering diagrams in $M_{\bb{R}}$ yield scattering diagrams in $N_{\bb{R}}$ as in our setup by taking $\omega_1^{-1}$ of the supports of the walls.  If $\f{g}$ is skew-symmetric with respect to $\{\cdot,\cdot\}$ and we take $K=\ker \omega_1$, then $\omega_1(N_{\bb{R}})$ is identified with $\?{N}_{\bb{R}}$, and so intersecting the walls in $M_{\bb{R}}$ with $\omega_1(N_{\bb{R}})$ recovers the viewpoint of (i) above.
\end{enumerate}
\end{rmk}

Note that for each $k>0$, a scattering diagram $\f{D}$ over $\f{g}$ induces a finite scattering diagram $\f{D}^k$ over $\f{g}_k$ with walls corresponding to the $\f{d}\in \f{D}$ for which $g_{\f{d}}$ is nontrivial in $\f{g}_k$.

Consider a smooth immersion $\gamma:[0,1]\rar N_{\bb{R}}\setminus \Joints(\f{D})$ with endpoints not in $\Supp(\f{D})$ which is transverse to each wall of $\f{D}$ it crosses.  Let $(m_{\f{d}_i},\f{d}_i,g_{\f{d}_i})$, $i=1,\ldots, s$, denote the walls of $\f{D}^{k}$ crossed by $\gamma$, and say they are crossed at times $0<t_1\leq \ldots \leq t_s<1$, respectively (if $t_i=t_{i+1}$, then the requirement that each $\f{g}_{\f{d}}$ is in $\f{g}_{n_{\f{d}},m_{\f{d}}}^{\parallel}$ implies that the ordering of these two walls does not affect \eqref{WallCross} and therefore does not matter).  Define 
\begin{align}\label{WallCross}
\theta_{\f{d}_i}:=\exp(g_{\f{d}_i})^{\sign \langle -\gamma'(t_i),m_{\f{d}_i}\rangle} \in G_k.
\end{align}
Let $\theta_{\gamma,\f{D}}^k:=\theta_{\f{d}_s} \cdots \theta_{\f{d}_1}\in G_k$, and define the path-ordered product:
\begin{align*}
\theta_{\gamma,\f{D}}:= \varprojlim_k \theta_{\gamma,\f{D}}^k \in \wh{G}.
\end{align*}

\begin{dfn}
Two scattering diagrams $\f{D}$ and $\f{D}'$ are  equivalent if $\theta_{\gamma,\f{D}} = \theta_{\gamma,\f{D}'}$ for each smooth immersion $\gamma$ as above.  $\f{D}$ is consistent if each $\theta_{\gamma,\f{D}}$ depends only on the endpoints of $\gamma$.
\end{dfn}

\begin{egs}\label{equivD}
~

\begin{enumerate}[label=(\roman*)]
\item Replacing a wall $(m_{\f{d}},\f{d},g_{\f{d}})\in \f{D}$ with the wall $(-m_{\f{d}},\f{d},-g_{\f{d}})$ produces an equivalent scattering diagram.
\item Consider a collection of walls $\{(m_{\f{d}},\f{d},g_{\f{d}_i}\in \f{g}_{n_{\f{d}},m_{\f{d}}}^{\parallel})\in \f{D}| i\in S\}$, where $S$ is some countable index set and $n_{\f{d}}$, $m_{\f{d}}$, and $\f{d}$ are independent of $i$.  Replacing this collection of walls with a single wall $(m_{\f{d}},\f{d},\sum_{i\in S} g_{\f{d}_i})$ produces an equivalent scattering diagram.
\item Replacing a wall $(m_{\f{d}},\f{d},g_{\f{d}})\in \f{D}$ with a pair of walls $(m_{\f{d}},\f{d}_i,g_{\f{d}})$, $i=1,2$, such that $\f{d}_1\cup \f{d}_2=\f{d}$ and $\codim(\f{d}_1\cap \f{d}_2)=2$ produces an equivalent scattering diagram.
\end{enumerate}
\end{egs}

The following theorem on scattering diagrams is fundamental to the theory.  The two-dimensional tropical vertex group cases were first proved in \cite{KS}.  The tropical vertex cases for higher-dimensional spaces (including more general affine manifolds than just $N_{\bb{R}}$) were proved in \cite{GS11}, and the result for more general $\f{g}$ follows from \cite[Thm. 2.1.6]{WCS} (cf. \cite[Thm. 1.21]{GHKK}).  Alternatively, we note that the existence part of the result follows from the construction of $\f{D}_k^{\infty}$ in \S \ref{Dkinfty} (a generalization of the construction from \cite[\S 1.4]{GPS}), while a separate uniqueness argument is given in \cite[\S 3.1]{CM}.

\begin{thm}\label{KSGS}
Let $\f{g}$ be an $N^+$-graded Lie algebra, and let $\f{D}_{\In}$ be a finite scattering diagram over $\f{g}$ whose only walls have full affine hyperplanes as their supports.  Then there is a unique-up-to-equivalence scattering diagram $\f{D}$, also denoted $\scat(\f{D}_{\In})$, such that $\f{D}$ is consistent, $\f{D} \supset \f{D}_{\In}$, and $\f{D}\setminus \f{D}_{\In}$ consists only of outgoing walls.
\end{thm}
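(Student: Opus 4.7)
The plan is to reduce to the finite nilpotent case and then argue by induction on order. Since $G = \varprojlim_k G^{\leq k}$, a scattering diagram for $\f g$ amounts to a compatible family $\{\f D^k\}_{k\geq 1}$ of scattering diagrams for the nilpotent Lie algebras $\f g^{\leq k}$. I would therefore construct $\f D = \varprojlim \f D^k$, where at each stage $\f D^k$ is a consistent scattering diagram for $\f g^{\leq k}$ that contains $\f D_{\In}^k$ and differs from it only by outgoing walls, and then establish uniqueness at each order.

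For the base case $k=1$, any loop $\gamma$ crosses each hyperplane supporting a wall of $\f D_{\In}$ with signed total zero, since the hyperplane separates $\s M_\bb R$ into two half-spaces and $\gamma$ returns to its starting side. At order one the group $G^{\leq 1}$ is abelian because any nontrivial bracket lies in $\f g^{>1}$, so the path-ordered product is simply the signed sum of the order-one parts of the $g_{\f d_i}$, which vanishes; hence $\f D_{\In}^1$ is consistent. For the inductive step, assume $\f D^k$ is consistent. Any new walls needed to upgrade $\f D^k$ to a consistent $\f D^{k+1}$ must have $g_{\f d}$ trivial modulo $\f g^{>k}$, so their relevant contribution lies in the abelian $\f g_{d=k+1}$. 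For each codimension-two joint $J$ of the polyhedral subdivision cut out by $\Supp(\f D^k)$ and each small loop $\gamma_J$ around $J$, the inductive hypothesis yields $\theta_{\gamma_J, \f D^k} \equiv \exp(\Theta_J)$ modulo $\f g^{>k+1}$ with $\Theta_J \in \f g_{d=k+1}$. Decomposing $\Theta_J = \sum_{d(n)=k+1}\Theta_{J,n}$ by $\s N^+$-degree, I would introduce, for each nonzero $\Theta_{J,n}$, a new outgoing wall emanating from $J$ supported on a half-hyperplane through $J$ parallel to $n^\perp$ and pointing in the direction $-\pi_1(n)$, with attached element $\exp(\pm\Theta_{J,n})$, the sign chosen so that a loop around $J$ now picks up $-\Theta_{J,n}$ to cancel the existing contribution. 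Skew-symmetry of $\f g$ is essential here: it guarantees that crossing such a wall contributes purely in $\f g_n$ at order $k+1$, so the corrections at different $n$ do not interfere with one another and do not spoil lower-order consistency.

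The main obstacle I anticipate is the codimension-two analysis along a joint: each $J$ is a polyhedral cone rather than a point, and I need $\Theta_{J,n}$ to be locally constant along $J$ so that a single globally defined outgoing wall can cancel it, and also to check that an outgoing wall introduced at $J$ does not break consistency at neighbouring joints of the refined polyhedral complex. This requires a careful local normal form near $J$, together with the observation that inductive consistency of $\f D^k$ forces $\theta_{\gamma_J,\f D^k}$ to be invariant as the basepoint of $\gamma_J$ slides along $J$, so that its first nonzero correction is covariantly constant along the joint. Uniqueness at each order follows by a parallel induction: if $\f D, \f D'$ are two consistent extensions of $\f D_{\In}$ with only outgoing walls added, then inductively $\f D^k$ and $(\f D')^k$ are equivalent, and their order $k+1$ difference is a consistent scattering diagram made only of outgoing walls with no incoming input, which the same local joint-by-joint analysis forces to be trivial. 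The hard technical core of the entire argument is thus the higher-dimensional analog of the Kontsevich-Soibelman lemma, which is precisely the content of the two-dimensional and higher-dimensional sources cited just before the theorem.
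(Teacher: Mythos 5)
The paper itself states this theorem without proof, attributing it to Kontsevich--Soibelman in dimension two and to Gross--Siebert and Kontsevich--Soibelman (the WCS paper) in higher dimensions, and citing it as Theorem 1.7 of GHKK; there is therefore no ``paper's own proof'' to compare against, only the approach of the cited sources. Your sketch---working through the pro-nilpotent tower $G^{\leq k}$, observing that $G^{\leq 1}$ is abelian so that $\f{D}_{\In}$ is consistent at order one, then at each order decomposing the joint discrepancy $\Theta_J\in\f{g}_{d=k+1}$ by $\s{N}^+$-degree and adding outgoing half-hyperplane walls to cancel it, with uniqueness following by the same local analysis applied to the ``difference'' of two solutions---is a faithful outline of exactly the Kontsevich--Soibelman/Gross--Siebert algorithm used in those references, and you correctly identify where the real technical work lives (the codimension-two joint analysis and the fact that joints are cones rather than points). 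One step you should make explicit rather than assume: for the corrective wall to be a legal wall in the sense of Definition \ref{WallDef}, its support must lie in a translate of $n^{\perp}$ containing the affine span of $J$. This holds because $\Theta_J$ is a bracket expression in the $g_{\f{d}}\in G_{n_{\f{d}}}^{\parallel}$ of walls through $J$, and by the $\s{N}^+$-grading any contributing $n$ is a nonnegative integral combination of the $n_{\f{d}}$; since the affine span of $J$ lies in every $m_{\f{d}}+n_{\f{d}}^{\perp}$, it lies in $n^{\perp}$ up to translation, so the half-hyperplane is well-defined. Without this observation the construction of the corrective wall is not actually defined, so it is more than a formality. Finally, it is worth remarking that the present paper's own technical machinery in \S\ref{Main Section} takes the complementary GPS-style perturbation route---factor $\f{D}_{\In}$ into infinitely many infinitesimal walls, perturb to make all interactions pairwise and generic, scatter by Lemma \ref{PentagonScatter}, then pass to the asymptotic diagram---which for the specific quantum-torus $\f{g}$ of interest gives an alternative and more explicit proof of existence (and the factorization structure needed for the tropical correspondence), whereas your joint-by-joint argument is the one behind the general-$\f{g}$ statement as cited.
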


We next give several important examples of initial scattering diagrams $\f{D}_{\In}$.  For a more specific example of a possible $\f{D}_{\In}$ and the corresponding $\scat(\f{D}_{\In})$, cf. Example \ref{A2Example}.

\begin{egs}\label{InExamples}
We present some important examples of initial scattering diagrams which will be examined more in \S \ref{ClusterSection}.  These examples build off those of Examples \ref{gegs}.  First though, we fix some additional data:

We fix a multiset (i.e., a set possibly with repetition) $E:=\{e_i\}_{i\in I}$ of vectors in $N$, indexed over a finite set $I$.  Let $F$ be a subset of $I$ such that $E_{I\setminus F}:=\{e_i\}_{i\in I\setminus F} \subset N^{\oplus}$ (typically, one would be given $N$, $E$, $I$, and $F$, and would then choose $N^{\oplus}\subset N$ to contain $E$).

For the skew-symmetric examples, we also we fix numbers $\{d_i\in \bb{Q}_{>0}\}_{i\in I}$ and define a bilinear form $(\cdot,\cdot)$ on $N$ satisfying 
\begin{align*}
    (e_i,e_j)=d_j\{e_i,e_j\}.
\end{align*}
We require that $(n_1,n_2)\in \bb{Z}$ whenever $n_1,n_2\in N$ with at least one of $n_1$ or $n_2$ being in $N^{\oplus}$.

The form $(\cdot,\cdot)$ determines maps $\pi_1,\pi_2:N^{\oplus}\rar M$, $\pi_1(n):=(n,\cdot)$ and $\pi_2(n):=(\cdot,n)$.  In all our skew-symmetric examples, we will have $\f{g}_n^{\parallel}=\f{g}_{n,\pi_1(n)}^{\parallel}$.  A natural choice for $K$ is $K:=\ker(\pi_1)$, which if $E$ spans $N_{\bb{Q}}$ is the same as $\ker(\omega_1)$.

\begin{enumerate}[label=(\roman*)]\setlength\itemsep{1em}
    \item Take $\f{g}=\f{h}$ as in Example \ref{gegs}(i).  In addition to $E$, suppose we are given a multiset $U=\{u_i\}_{i\in I\setminus F}$, this time with vectors $u_i\in \?{M}\setminus \{0\}$, such that $\langle e_i,u_i\rangle=0$ for each $i\in I\setminus F$. 
    Then we take the initial scattering diagram to be   
    \begin{align*}
    \f{D}_{\In}:=\{(u_i, u_i^{\perp},\log(1+z^{e_i})\partial_{u_i})|i\in I\setminus F\}.
\end{align*}
The wall-crossing automorphism for crossing from the side of $(u_i, u_i^{\perp},\log(1+z^{e_i})\partial_{u_i})$ containing some $p\in P$ to the other side then acts by 
\begin{align}\label{TropicalVertexWallCross}
    z^p\mapsto z^p(1+z^{e_i})^{|\langle p,u_i\rangle|}.
\end{align}  Such walls are commonly (e.g., in \cite{GPS} and \cite{GHKK}) denoted as simply $(u_i^{\perp},(1+z^{e_i})^{|u_i|})$.  

\item Now take $\f{g}=\f{g}^{\omega}\subset \f{h}$ as in Example \ref{gegs}(ii).  We take $\f{D}_{\In}$ to be the special case of $\f{D}_{\In}$ from the previous example in which $u_i$ is taken to be $-\pi_2(e_i) = d_i\omega_1(e_i)$ for each $i\in I\setminus F$.

Using the embedding $\iota:z^n\partial_{\omega_1(n)}\mapsto z^n$ of $\wh{\f{g}}^{\omega}$ into the Poisson algebra $\wh{A}=R\llb N^{\oplus} \rrb_P$, the initial scattering functions $\log(1+z^{e_i})\partial_{d_i\omega_1(e_i)}$ become dilogarithms:
\begin{align}\label{Li2}
    \iota\left(\log(1+z^{e_i})\partial_{d_i\omega_1(e_i)}\right) &= \iota\left(\sum_{k=1}^{\infty} (-1)^{k+1}\frac{1}{k^2} d_iz^{ke_i}\partial_{\omega_1(ke_i)} \right) \nonumber\\
    &= d_i\sum_{k=1}^{\infty} (-1)^{k+1}\frac{z^{ke_i}}{k^2} \nonumber\\
    &= -d_i\Li_2(-z^{e_i}),
\end{align}
where $\Li_2$ is the dilogarithm function defined by
\[\Li_2(x):=\sum_{k= 1}^{\infty} \frac{x^k}{k^2}.
\]
Thus, we can write the initial scattering diagram as
\begin{align}\label{DInPoisson}
    \f{D}_{\In} = \{(\omega_1(e_i),\omega_1(e_i)^{\perp},-d_i\Li_2(-z^{e_i}))|i\in I\setminus F\}.
\end{align}

\item Consider the quantization $\f{g}=\f{g}^{\omega}_q$ as in Example \ref{gegs}(iii), that is, $\wh{\f{g}}=R_q\llb N^+ \rrb \subset R_q \llb N^{\oplus} \rrb_P=\wh{A}$.  Similarly to in the previous example, we take the initial scattering diagram to be
\begin{align}\label{qIn}
\f{D}_{\In}:=\{(\omega_1(e_i), \omega_1(e_i)^{\perp},-\Li_2(-z^{e_i};q^{1/d_i})\},
\end{align}
where the scattering functions are now defined in terms of quantum dilogarithms:
\begin{align*}
    \Li_2(x;q):=\sum_{k=1}^{\infty} \frac{x^k}{k[k]_{q}}.
\end{align*}
Here, we use our notation $[k]_q=q^k-q^{-k}$, so $[k]_{q^{1/d_i}}=[k/d_i]_q$.  Note that the $q\mapsto 1$ limit of $\Li_2(x;q)$ is $\Li_2(x)$ (with $\frac{x}{q-q^{-1}}$ mapping to $x$), so this $\f{D}_{\In}$ does indeed specialize to the one from the previous example in the $q^{1/D}\mapsto 1$ limit (with $\frac{z^n}{q-q^{-1}}$ mapping to $z^n$).  Let 
\begin{align*}
    \Psi_{q^{1/d_i}}(z^{e_i}):=\exp(-\Li_2(-z^{e_i};q^{1/d_i}))=\prod_{k=1}^{\infty} \frac{1}{1+q^{(2k-1)/d_i}z^{e_i}} \in \wh{G}.
\end{align*}
Then for any $p\in P$, crossing a wall as above from the side containing $p$ to the other side acts on $z^p$ via
\begin{align}\label{qWallCross}
    \Psi_{q^{1/d_i}}(z^{e_i})^{\sign \{e_i,p\}} \cdot z^p &= \Psi_{q^{1/d_i}}(z^{e_i})^{\sign \{e_i,p\}} z^p \Psi_{q^{1/d_i}}(z^{e_i})^{-\sign \{e_i,p\}} \nonumber\\
    &= z^p \prod_{k=1}^{d_i|\{e_i,p\}|} (1+q^{\sign(\{e_i,p\})(2k-1)/d_i}z^{e_i}).
\end{align}
\end{enumerate}
\end{egs}

Given an $N^+$-graded Lie algebra $\f{g}$ as above and any commutative, associative algebra $T$, we can obtain another $N^+$-graded Lie algebra $\f{g}\otimes T$ with bracket defined by $[g_1\otimes t_1,g_2\otimes t_2]:=[g_1,g_2]_{\f{g}}\otimes (t_1t_2)$ (when it is possibly not clear from context, we will use subscripts after brackets to indicate the Lie algebra in which the bracket is performed).  We will denote elements $g\otimes t$ as simply $tg$.  We denote $N^+$-adic completion of $\f{g}\otimes T$ by $\f{g}\wh{\otimes} T$, and we similarly denote the corresponding Lie group as $G\wh{\otimes} T$.  These act on the algebra $A\wh{\otimes} T$ obtained by taking the $N^+$-adic completion of $A\otimes T$.  Here, the action of $\f{g}\wh{\otimes} T$ on $A\wh{\otimes} T$ is given by $(tg)\cdot a=(g\cdot a)\otimes t$, also denoted $t(g\cdot a)$.  We will often use this construction to adjoin nilpotent elements.  The following lemma is straightforward.

\begin{lem}\label{SimpleBreak}
Let $T$ be a commutative, associative algebra with $t\in T$, $t^2=0$.  Let $g\in \wh{\f{g}}$, $a\in \wh{A}$.  Then
\begin{align*}
\exp(tg)\cdot a = a+t(g\cdot a).
\end{align*}
Here, $\cdot$ on the left-hand side is the action of $G\wh{\otimes} T$ on $A\wh{\otimes} T$, while $\cdot$ on the right-hand side is the action of $\f{g}$ on $\wh{A}$.
\end{lem}

In \S \ref{ScatterFactor}, the construction of $\scat(\f{D}_{\In})$ from $\f{D}_{\In}$ will depend on repeatedly applying the following computation:\footnote{Lemma \ref{PentagonScatter} in the cases where $\dim N=2$ and $\f{g}=\f{h}$ is essentially \cite[Lemma 1.9]{GPS}.  In the cases with $\dim N=2$ and $\f{g}=\f{g}^{\omega}_q$, it is \cite[Lemma 4.3]{FS}.}

\begin{lem}\label{PentagonScatter}
Suppose we have an $N^+$-graded Lie algebra $\f{g}$ and a commutative associative algebra $T$ with $t_1,t_2\in T$, $t_1^2=t_2^2=0$.  Fix $n_1,n_2\in N^+$, and fix primitive $m_1,m_2\in \?{M}$ such that $\langle n_i,m_i\rangle =0$ for $i=1,2$.  Also, fix some $g_i\in \f{g}_{n_i}$ for $i=1,2$.  Let 
\begin{align*}
    \f{D}_{\In}:= \{(m_1,m_1^{\perp},t_1g_1),(m_2,m_2^{\perp},t_2g_2)\}
\end{align*}
be a scattering diagram over $\f{g}\otimes T$.  Then $\scat(\f{D}_{\In})=\f{D}_{\In} \cup \{(m_3,\f{d}_3,g_3)\}$, where
\begin{align*}
    m_3&:=\mu((n_1,m_1),(n_2,m_2)),\\
    \f{d}_3&:=(m_1^{\perp}\cap m_2^{\perp})+\bb{R}_{\leq 0}(n_1+n_2), \mbox{~~~~~and}\\
    g_3&:=t_1t_2[g_1,g_2]_{\wh{\f{g}}}.
\end{align*}
\end{lem}

\begin{proof} First, recall from \eqref{m} that $\mu((n_1,m_1),(n_2,m_2)):=\langle n_2,m_1\rangle m_2-\langle n_1,m_2\rangle m_1$.  One easily checks now that $(m_1\cap m_2)^{\perp}\subset m_3^{\perp}$ and $n_1+n_2\in m_3^{\perp}$, so  $$m_3^{\perp}\supset (m_1^{\perp}\cap m_2^{\perp})+\bb{R}(n_1+n_2).$$ Hence, $m_3^{\perp}$ does contain $\f{d}_3$.  

~

\noindent\begin{minipage}{0.58\textwidth}
\setlength{\parindent}{15pt}
 Now, let $\gamma$ be a path as in the figure to the right, going from the region with $m_1,m_2<0$ to the region with $m_2>0$ and $m_1< 0$, then to $m_1,m_2> 0$, then to $m_2< 0$ and $m_1> 0$, and then back to $m_1,m_2 < 0$.  Then 
\begin{align*}
    \theta_{\gamma}&=\exp(t_1g_1)\exp(t_2g_2)\exp(-t_1g_1)\exp(-t_2g_2) \\
    &= [\exp(t_1g_1),\exp(t_2g_2)]_{G\wh{\otimes}T},
\end{align*}
where $[a,b]_{G\wh{\otimes}T}:=aba^{-1}b^{-1}$ for any $a,b\in G\wh{\otimes} T$. 
\end{minipage}
\hfill
\begin{minipage}{.35\textwidth}
\def\svgwidth{150pt}
\begingroup%
  \makeatletter%
  \providecommand\color[2][]{%
    \errmessage{(Inkscape) Color is used for the text in Inkscape, but the package 'color.sty' is not loaded}%
    \renewcommand\color[2][]{}%
  }%
  \providecommand\transparent[1]{%
    \errmessage{(Inkscape) Transparency is used (non-zero) for the text in Inkscape, but the package 'transparent.sty' is not loaded}%
    \renewcommand\transparent[1]{}%
  }%
  \providecommand\rotatebox[2]{#2}%
  \ifx\svgwidth\undefined%
    \setlength{\unitlength}{267.40022095bp}%
    \ifx\svgscale\undefined%
      \relax%
    \else%
      \setlength{\unitlength}{\unitlength * \real{\svgscale}}%
    \fi%
  \else%
    \setlength{\unitlength}{\svgwidth}%
  \fi%
  \global\let\svgwidth\undefined%
  \global\let\svgscale\undefined%
  \makeatother%
  \begin{picture}(1,0.88008287)%
    \put(0,0){\includegraphics[width=\unitlength,page=1]{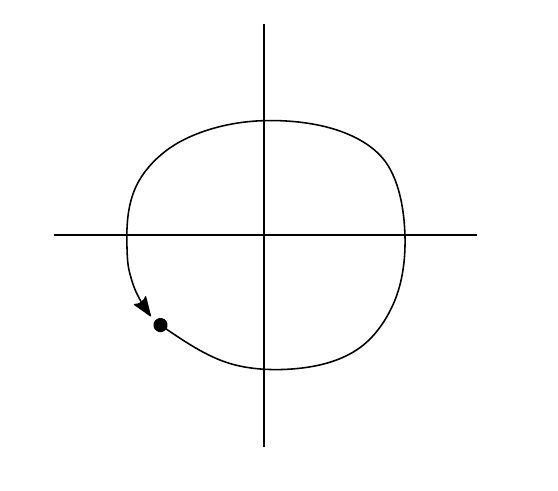}}%
    \put(0.61644833,0.69077196){\color[rgb]{0,0,0}\makebox(0,0)[lt]{\begin{minipage}{0.42108787\unitlength}\raggedright $\gamma$\end{minipage}}}%
    \put(0.03510934,0.44509094){\color[rgb]{0,0,0}\makebox(0,0)[lb]{\smash{$0$}}}%
    \put(0.45740081,0.02030768){\color[rgb]{0,0,0}\makebox(0,0)[lb]{\smash{$0$}}}%
    \put(0,0){\includegraphics[width=\unitlength,page=2]{m1m2.pdf}}%
    \put(0.85185041,0.0502884){\color[rgb]{0,0,0}\makebox(0,0)[lt]{\begin{minipage}{0.17858589\unitlength}\raggedright $m_2$\end{minipage}}}%
    \put(-0.0037251,0.84598721){\color[rgb]{0,0,0}\makebox(0,0)[lb]{\smash{$m_1$}}}%
    \put(0.24357877,0.02766161){\color[rgb]{0,0,0}\makebox(0,0)[lb]{\smash{$-$}}}%
    \put(0.63439785,0.02205204){\color[rgb]{0,0,0}\makebox(0,0)[lb]{\smash{$+$}}}%
    \put(0.02558854,0.25143227){\color[rgb]{0,0,0}\makebox(0,0)[lb]{\smash{$-$}}}%
    \put(0.02258445,0.62955854){\color[rgb]{0,0,0}\makebox(0,0)[lb]{\smash{$+$}}}%
  \end{picture}%
\endgroup%
\label{m1m2}
\end{minipage}

~

We claim that 
\begin{align}
[\exp(t_1g_1),\exp(t_2g_2)]_{G\wh{\otimes} T} = \exp([t_1g_1,t_2g_2]_{\f{g}\wh{\otimes} T}).
\end{align}
Indeed, the Baker-Campbell-Hausdorff formula tells us that for any $x,y\in \wh{\f{g}}$, we have \[\log(\exp(t_1x)\exp(t_2y))=t_1x+t_2y+\frac{1}{2}t_1t_2[x,y],\] and using this, we compute:
\begin{align*}
    \log([\exp(t_1g_1),\exp(t_2g_2)]) &= \log(\exp(t_1 g_1)\exp(t_2g_2)\exp(-t_1 g_1) \exp(-t_2 g_2)) && \\
                                &= \log(\exp(\log(\exp(t_1 g_1)\exp(t_2g_2)))\exp(\log(\exp(-t_1 g_1) \exp(-t_2 g_2))) && \\
                                &= \log(\exp(t_1g_1+t_2g_2+\frac{1}{2} t_1t_2 [g_1,g_2]) \exp(-t_1 g_1 -t_2 g_2 +\frac{1}{2} t_1t_2 [g_1,g_2])) && \\
                                &= (t_1g_1+t_2g_2+\frac{1}{2} t_1t_2 [g_1,g_2]) + (-t_1 g_1 -t_2 g_2 +\frac{1}{2} t_1t_2 [g_1,g_2]) && \\
                                 &~~ + \frac{1}{2} [t_1g_1+t_2g_2+\frac{1}{2} t_1t_2 [g_1,g_2],-t_1 g_1 -t_2 g_2 +\frac{1}{2} t_1t_2 [g_1,g_2]] && \\
                                 &= [t_1g_1,t_2g_2]. &&
\end{align*}
Thus, $\theta_{\gamma}=\exp(t_1t_2[g_1,g_2])=\exp(g_3)$.  Since $g_3=t_1t_2[g_1,g_2]$ is in $\f{g}_{n_1+n_2}$ and commutes with both $t_1g_1$ and $t_2g_2$, we just have to check that crossing $\f{d}_3$ along $\gamma$ induces the scattering automorphism $g_3^{-1}$.  That is, we just have to check that $\langle -\gamma'(t),m_3\rangle<0$, where $t$ is the time at which $\gamma$ passes $\f{d}_3$.

Suppose $\langle n_1,m_2\rangle \geq 0$ and $\langle n_2,m_1\rangle \geq 0$.  Then $\langle n_1,m_3\rangle 
 \geq 0$,  
 and when $\gamma$ passes through $\f{d}_3$, it comes from the side of $\f{d}_3$ which contains $-n_1$.  Hence, $\langle -\gamma'(t),m_3\rangle \leq 0$, as desired.  The cases where one or both of $\langle n_1,m_2\rangle$ and $\langle n_2,m_1\rangle$ are negative are similarly checked.

\end{proof}

\subsection{Broken lines and theta functions}\label{Broken Lines}

Fix a consistent scattering diagram $\f{D}$ over $\f{g}$, with $\wh{\f{g}}$ acting on $\wh{A}$ as in \S \ref{scatter}.  Recall that for each $p\in \?{P}$, we have designated an element $z^{\varphi(p)}\in A_{\varphi(p)}$.

\begin{dfn}\label{broken line}
Let $p \in \?{P}\setminus \{0\}$, $Q\in N_{\bb{R}}\setminus \Supp(\f{D})$.  A  broken line $\gamma$ with  ends $(p,Q)$ is the data of a continuous map $\gamma:(-\infty,0]\rar N_{\bb{R}}\setminus \Joints(\f{D})$, values $-\infty < t_0 \leq t_1 \leq \ldots \leq t_{\ell} = 0$, and for each $i=0,\ldots,\ell$, an associated homogeneous element $a_i \in A_{v_i}$ for some $v_{i} \in P\setminus \{0\}$, such that:
\begin{enumerate}[label=(\roman*), noitemsep]
\item $\gamma(0)=Q$.
\item For $i=1\ldots, \ell$, $\gamma'(t)=-v_i$ for all $t\in (t_{i-1},t_{i})$.  Similarly, $\gamma'(t)=-v_0$ for all $t\in (-\infty,t_0)$. 
\item $a_0=z^{\varphi(p)}$.
\item For $i=0,\ldots,\ell-1$, $\gamma(t_i)\in \Supp(\f{D})$.  Let 
\begin{align*}
g_i:=\prod_{\substack{(m_{\f{d}},\f{d},g_{\f{d}})\in \f{D} \\ \f{d}\ni \gamma(t_i)}} \exp(g_{\f{d}})^{\sign(\langle v_i,m_{\f{d}} \rangle)} \in \wh{G}.
\end{align*}
I.e., $g_i$ is the $\epsilon\rar 0$ limit of the wall-crossing automorphism $\theta_{\gamma|_{(t_i-\epsilon,t_i+\epsilon)}}$ defined in \eqref{WallCross} (using a smoothing of $\gamma$). Then $a_{i+1}$ is a homogeneous term of $g_i\cdot a_i$.
\end{enumerate}
  We will call $v_{i+1}-v_i\in N^{\oplus}$ a bend of  $\gamma$.  We assume all bends are nonzero, so we cannot get new broken lines by just inserting new values of $t$ as trivial bends.  A straight broken line is a broken line with no bends.  By the type of a broken line $\gamma$ as above, we mean the data of the elements $a_i\in A_{v_i}$, $i=0,\ldots,\ell$.
\end{dfn}

Fix a generic point $Q \in N_{\bb{R}}\setminus \Supp(\f{D})$.  For any $p\in \?{P} \setminus \{0\}$, we define a theta function
\begin{align}\label{vartheta-dfn}
\vartheta_{p,Q}:=\sum_{\Ends(\gamma)=(p,Q)} a_{\gamma} \in \wh{A}.
\end{align}
Here, the sum is over all broken lines $\gamma$ with ends $(p,Q)$, and $a_{\gamma}$ denotes the homogeneous element of $\wh{A}$ attached to the final straight segment of $\gamma$.  That this is well-defined will be proven shortly.  For the case $p=0$, we define $\vartheta_{0,Q}=1$.

\begin{eg}\label{A2Example}
\begin{figure}[htb]
\def\svgwidth{250pt}
    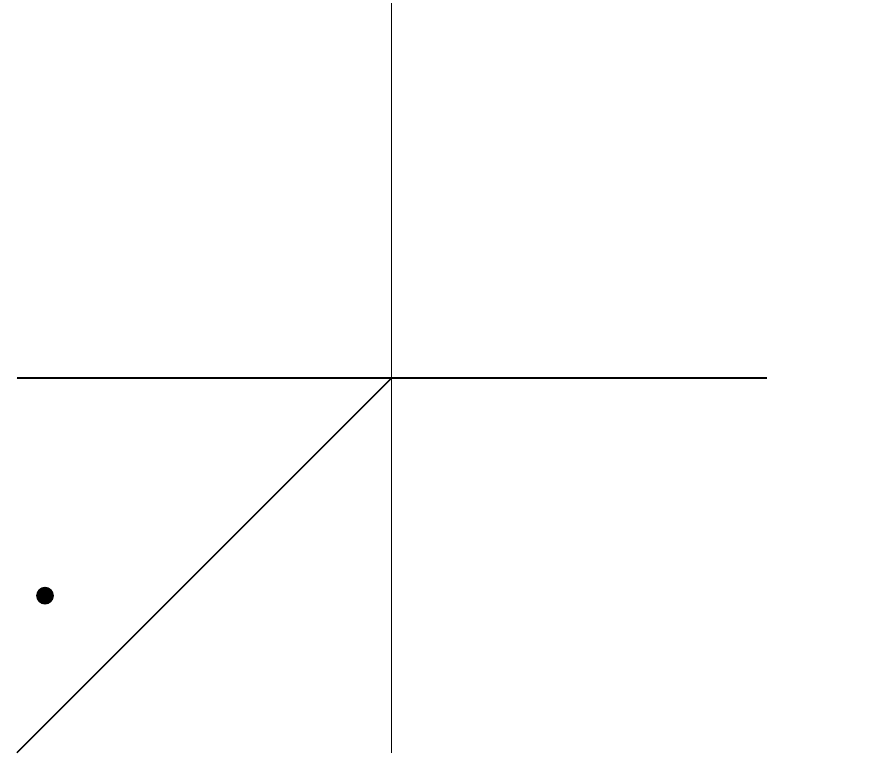
    \caption{\label{A2}}
\end{figure}

Let $N=\bb{Z}^2$, equipped with the standard skew-symmetric form, and consider the quantum torus algebra setup as in Example \ref{gegs}(iii).  Consider the scattering diagram $\f{D}_{\In}$ with walls $\f{d}_1:=(e_2^*,(e_2^*)^{\perp},-\Li_2(-z^{e_1};q))$ and  $\f{d}_2:=(-e_1^*,(-e_1^*)^{\perp},-\Li_2(-z^{e_2};q))$.  Then $\f{D}:=\scat(\f{D}_{\In})$ consists of one additional wall $\f{d}_3:=(e_2^*-e_1^*,(e_2^*-e_1^*)^{\perp}\cap \bb{R}_{\leq 0}^2,-\Li_2(-(q-q^{-1})z^{e_1+e_2};q))$.  The supports of these walls are illustrated in Figure \ref{A2} as solid lines.  The consistency can be written as the expression $\Psi_q(z^{e_1})\Psi_q(z^{e_2}) = \Psi_q(z^{e_2}) \Psi_q((q-q^{-1})z^{e_1+e_2})\Psi_q(z^{e_1})$ (the two sides of this equation corresponding to the two paths from the bottom-right quadrant to the top-left), which is a modified version of the quantum pentagon identity of \cite{FK}.  The dashed lines in Figure \ref{A2} are the broken lines for $\vartheta_{e_1,Q}$.  See \eqref{qWallCross} for the formula used for computing the wall-crossings.  From bottom to top, the final monomials attached to these broken lines are $z^{e_1}$, $(q-q^{-1})^2z^{2e_1+e_2}$, and $(q-q^{-1})z^{e_1+e_2}$, so $\vartheta_{e_1,Q}$ is the sum of these three terms.
\end{eg}

We will now prove several facts about these theta functions, beginning with showing that they are well-defined.  Given $n\in N^{\oplus}$, let 
\begin{align}\label{dn}
d(n) \in \bb{Z}_{\geq 0}
\end{align} denote the largest number $k$ such that $n\in kN^+$ (as defined in \eqref{kNplus}), taking $d(0)$ to be $0$.  Note that $d(n_1+n_2) \geq d(n_1)+d(n_2)$ for all $n_1,n_2\in N^{\oplus}$.  Now, note that for $a\in \wh{A}_p$ and $g\in \f{g}_n$,
\begin{align}\label{action-grading}
    \exp(g)\cdot a \in a+\bigoplus_{k\in \bb{Z}_{>0}} \wh{A}_{p+kn}.
\end{align} 
That is, $\exp(g)\cdot a$ is equal to $a$ plus terms of degree equal to $p$ plus a positive multiple of $n$.  Hence, for any broken line $\gamma$, we always have $d(v_{i+1})>d(v_i)$ (notation as in Definition \ref{broken line}).  That is, bends always increase $d$ of the degree of the elements attached to the straight segments of $\gamma$, so a broken line $\gamma$ with ends $(p,Q)$ contributing $a_{\gamma}\in A_{v_{\gamma}}$ to \eqref{vartheta-dfn} has at most $d(v_{\gamma}-\varphi(p))$ bends.  Recall from Definition \ref{WallDef} the requirement that for each $k>0$, $g_{\f{d}}$ projects to $0$ in $\f{g}_k$ for all but finitely many walls $\f{d}\in \f{D}$.  It now follows that for each $k>0$, there are only finitely many broken lines $\gamma$ with $\Ends(\gamma)=(p,Q)$ such that the projection of $a_{\gamma}$ to $\f{g}_k$ is non-trivial.  Hence, \eqref{vartheta-dfn} is indeed well-defined.

Furthermore, since bends shift the degree of the attached element by an element of $N^+$, we see that the term in \eqref{vartheta-dfn} of minimal degree is the one associated to the unbroken line, i.e., $z^{\varphi(p)}$.  That is,
\begin{align}\label{zp}
\vartheta_{p,Q}\in z^{\varphi(p)}+\wh{A}_{\varphi(p)+N^+},
\end{align}
where $\wh{A}_{\varphi(p)+N^+}$ is the ideal of $\wh{A}$ consisting of the topological span of terms with grading equal to $\varphi(p)+n$ for some $n\in N^+$.  Let $$\?{P}^{\circ}\subset \?{P}$$ be the subset consisting of the elements $p$ such that $a z^{\varphi(p)} \neq 0$ for any nonzero $a\in \wh{A}_K$.  It follows from \eqref{zp} that the set $\{\vartheta_{p,Q}\in \wh{A}|p\in \?{P}^{\circ}\}$ (with fixed $Q$) is linearly independent over $\wh{A}_K$.

Recall that $P=\varphi(\?{P})+K\cap P$.  We will frequently want to make the following assumptions:
\begin{asss}\label{zpass} ~

\begin{enumerate}[label=(\roman*)]
    \item $\?{P}^{\circ}=\?{P}$ (e.g., $\wh{A}$ is an integral domain and each $z^{\varphi(p)}$ is nonzero).
    \item For every $p\in \?{P}$, $A_{\varphi(p)+P\cap K}=z^{\varphi(p)}A_K$.
\end{enumerate}  
\end{asss}
These assumptions are indeed satisfied in Examples \ref{gegs}(i)-(iii).  

Assumption \ref{zpass}(ii) implies that $\wh{A}$ is topologically spanned over $\wh{A}_K$ by $\{z^{\varphi(p)}|p\in \?{P}\}$.  It follows from this and \eqref{zp} that $\{\vartheta_{p,Q}|p\in \?{P}\}$ spans $\wh{A}$ topologically over $\wh{A}_K$.  In summary, we have the following:

\begin{prop}\label{ThetaProp}
For fixed generic $Q \in N_{\bb{R}}\setminus \Supp(\f{D})$ and any $p\in \?{P}$, \eqref{vartheta-dfn} gives a well-defined element $\vartheta_{p,Q} \in z^{\varphi(p)}+\wh{A}_{p+N^+} \subset \wh{A}$.  Under Assumptions \ref{zpass}, the theta functions $\Theta_Q:=\{\vartheta_{p,Q}|p\in \?{P}\}$ form an additive topological basis for $\wh{A}$ over $\wh{A}_K$, hence also (at least topologically) span the subalgebra $A_{\Theta,Q}\subset \wh{A}$ generated by $\Theta_Q$.
\end{prop}

The following is a fundamental feature of theta functions.\footnote{When working over $\f{h}$, chambers of the scattering diagram give charts for the mirror manifold, and path-ordered products give the transition functions.  In this context, Theorem \ref{CPS} can roughly be interpreted as saying that the locally defined theta functions $\vartheta_{p,Q}$ patch together correctly to form global functions on the mirror.}

\begin{thm}[Refined \cite{CPS} result]\label{CPS}
Consider $\f{D}=\scat(\f{D}_{\In})$ as in Theorem \ref{KSGS}.  Fix two generic points $Q_1,Q_2\in N_{\bb{R}}\setminus \Supp(\f{D})$.  Let $\gamma$ be a smooth path in $N_{\bb{R}}\setminus \Joints(\f{D})$ from $Q_1$ to $Q_2$.  Then for any $p\in \?{P}$,
\begin{align*}
    \vartheta_{p,Q_2}=\theta_{\gamma,\f{D}} (\vartheta_{p,Q_1}).
\end{align*}
\end{thm}
When working over the module of log derivations as in Example \ref{gegs}(i) or (ii) (but for more general consistent scattering structures on more general integral affine manifolds than just $N_{\bb{R}}$), Theorem \ref{CPS} is due to \cite{CPS} (their Lemmas 4.7 and 4.9).  The author imagines that the arguments of \cite{CPS} can be generalized to any $\f{g}$ and $\wh{A}$ as above, but in \S \ref{CPSproof} we will sketch a new argument in terms of counts of tropical disks.

Theorem \ref{CPS} implies in particular that, as an abstract $\wh{A}_K$-algebra, $A_{\Theta,Q}$ is independent of the choice of $Q$ (although the embedding into $\wh{A}$ does depend on $Q$).  We will denote this abstract algebra by $A_{\Theta}$, and we let $\vartheta_p\in A_{\Theta}$ denote the element $\vartheta_{p,Q}\in A_{\Theta,Q}$ under this identification $A_{\Theta,Q}\cong A_{\Theta}$.

Under Assumptions \ref{zpass}, one sees that $A_{\Theta}$ and the theta functions are determined by the structure constants $\alpha(p_1,\ldots,p_s;p)\in \wh{A}_K$, $p_1,\ldots,p_s,p\in \?{P}$, defined by
\begin{align*}
    \vartheta_{p_1}\cdots \vartheta_{p_s} = \sum_{p\in \?{P}:z^p\neq 0} \alpha(p_1,\ldots,p_s;p) \vartheta_p.
\end{align*}
Even when Assumptions \ref{zpass} do not hold, each generic $Q\in N_{\bb{R}}\setminus \Supp(\f{D})$ determines an embedding $A_{\Theta} \cong A_{\Theta,Q} \subset \wh{A}$, hence a $\?{P}$-grading on $A_{\Theta}$, and we define\footnote{We note that $\alpha_Q(p_1,\ldots,p_s;p)$ is independent of $Q$ if $p=0$.} \begin{align}\label{alphaQ}
\alpha_Q(p_1,\ldots,p_s;p) \in A_p  
\end{align}
to be the degree $p$ part of $\vartheta_{p_1,Q}\cdots \vartheta_{p_s,Q}$.  The next proposition (generalizing \cite[Prop. 6.4(3)]{GHKK} and following the same argument) tells us how to compute the $\alpha$'s.
\begin{prop}\label{alphaprop}
For $p_1,\ldots,p_s,p\in \?{P}$ and generic $Q\in N_{\bb{R}}\setminus \Supp(\f{D})$,
\begin{align}\label{alphaform}
\alpha_Q(p_1,\ldots,p_s;p)=\sum_{\substack{\gamma_1,\ldots,\gamma_s \\
    	               \Ends(\gamma_i)=(p_i,Q), i=1,\ldots,s \\
		                \pi_K(v_{\gamma_1}+\ldots+v_{\gamma_s}) = p}}
	                a_{\gamma_1} \cdots a_{\gamma_s},
\end{align}
where the sum is over all ordered $s$-tuples of broken lines $(\gamma_i)_{i=1,\ldots,s}$ with $\Ends(\gamma_i)=(p_i,Q)$, and $a_{\gamma_i}\in A_{v_{\gamma_i}}$ is the element attached to the final straight segment of $\gamma_i$.

Now suppose Assumptions \ref{zpass} hold.  Then $\alpha(p_1,\ldots,p_s;0)=\alpha_{Q}(p_1,\ldots,p_s;0)$ for each generic $Q\in N_{\bb{R}}\setminus \Supp(\f{D})$.  More generally, for any $p\in \?{P}$, we have
\begin{align}\label{alphaKform}
\alpha(p_1,\ldots,p_s;p)z^{\varphi(p)} = \sum_{\ell \in \bb{Z}_{\geq 0}} \sum_{\substack{\gamma_1,\ldots,\gamma_s \\
    	               \Ends(\gamma_i)=(\varphi(p_i),Q_{\ell}), i=1,\ldots,s \\
		                \pi_K(v_{\gamma_1}+\ldots+v_{\gamma_s}) = p \\
		                d(v_{\gamma_1}+\ldots+v_{\gamma_s}-\varphi(p))=\ell}}
	                a_{\gamma_1} \cdots a_{\gamma_s} \in \wh{A}_K,
\end{align}
where $d$ is as in \eqref{dn} and $Q_{\ell}$ shares a maximal cell of $\f{D}^{\ell}$ with $\varphi(p)$.
\end{prop}
\begin{proof}
This first claim is straightforward from the definitions, with the finiteness of the sum in \eqref{alphaform} following from the well-definedness of $\vartheta_{p_i,Q} \in \wh{A}$ (Prop. \ref{ThetaProp}).

For the second claim, we observe that the straight broken line with attached element $z^{\varphi(p)}$ is the only broken line $\gamma$ over $\f{D}^{\ell}$ with $\pi_K(v_{\gamma})=p$ and end at a point $Q_{\ell}$ which shares a maximal cell of $\f{D}^{\ell}$ with $\varphi(p)$.  To see this, note that if we start at $Q_{\ell}$ and move in the $v_{\gamma}$-direction, then  we will never hit a wall of $\f{D}^{\ell}$ and so $\gamma$ cannot contain any bends.  Hence, the only $q\in \?{P}$ such that $\vartheta_{q,Q_{\ell}}$ has a $z^{\varphi(p)}$-term is $q=p$.  On the other hand, \eqref{zp} says that $\vartheta_{p,Q}$ equals $z^{\varphi(p)}$ plus higher degree terms.  Thus, for any $f\in \wh{A}$, the $z^{\varphi(p)}$-coefficient of $f$ expanded in the topological $\wh{A}_K$-module basis $\{z^{\varphi(n)}\}_{n\in \?{P}}$ of $\wh{A}$ must agree, modulo the topological span of $A^{\geq \ell}$, with the $\vartheta_{p,Q_{\ell}}$-coefficient of $f$ expanded in the topological basis $\{\vartheta_{p,Q_{\ell}}\}_{p\in P}$.  The claim now follows from considering the case $f=\vartheta_{p_1,Q_{\ell}}\cdots \vartheta_{p_s,Q_{\ell}}$.
\end{proof}

\subsection{A non-degenerate trace pairing}\label{TrNonDeg}
The Frobenius Structure Conjecture of \cite[arXiv v1, \S 0.4]{GHK1} predicts the existence of a certain associative algebra associated to any log Calabi-Yau variety with maximal boundary $(Y,D)$.  More precisely, the algebra has an additive (topological) basis of ``theta functions,''  and the multiplication rule is determined by a ``trace'' function which is defined in terms of certain descendant log Gromov-Witten invariants of $(Y,D)$.  In this subsection we will consider a certain trace function on $A_{\Theta}$ and prove that it is non-degenerate, hence is sufficient to completely determine the structure constants for the theta function multiplication.  Separate work of the author uses Theorem \ref{MainTheorem} and some tropical correspondence results to prove that this trace really is given by the desired GW invariants, and the combination of these results proves the Frobenius structure conjecture for cluster varieties. 

Viewing $\wh{A}$ as a topological $\?{P}$-graded $\wh{A}_K$-algebra, we have a map of $\wh{A}_K$-modules
\begin{align*}
\Tr:\wh{A} \rar \wh{A}_K
\end{align*}
taking an element $f\in \wh{A}$ to its degree $0$ part (using the $\?{P}$-grading).  Since we assumed that $\f{g}\cdot A_K=0$, all wall-crossing automorphisms act trivially on $\wh{A}_K$, and so $\Tr$ induces a map $\Tr:A_{\Theta}\rar \wh{A}_K$ as well (no dependence on $Q$). 
$\Tr$ also induces an ``s-point function''
\[
\begin{array}{c c}
\Tr^s: \wh{A}^{\otimes s} \rar \wh{A}_K, 
&f_1\otimes \cdots \otimes f_s \mapsto \Tr(f_1\cdots f_s),
\end{array}
\]
and similarly for $\Tr^s:A_{\Theta}^{\otimes s} \rar \wh{A}_K$ for each $s\geq 1$. The following theorem implies that these uniquely determines $A_{\Theta}$ and the theta functions.\footnote{An algebro-geometric proof for a version of Theorem 
\ref{TrThm} in the two-dimensional tropical vertex group situation has previously been found by Gross-Hacking-Keel \cite{GHK2}.}

\begin{thm}\label{TrThm}
Assume that $A$ is an integral domain and that $z^{\varphi(p)}$ is nonzero for each $p\in \?{P}$.  The map
\begin{align*}
\Tr^{\vee}:A_{\Theta} \rar \Hom_{\wh{A}_K}(A_{\Theta}, \wh{A}_K), ~ a\mapsto [b\mapsto \Tr(ab)]
\end{align*}
is injective.\footnote{In fact, the same proof shows the strong statement that the similarly defined map $\Tr^{\vee}:\wh{A} \rar \Hom_{\wh{A}_K}(A_{\Theta,Q}, \wh{A}_K)$ is injective for each $Q$.}  Hence, given the topological $\wh{A}_K$-module structure on $A_{\Theta}$, the $\wh{A}_K$-algebra structure (i.e., the multiplication rule) is uniquely determined by $\Tr^2$ and $\Tr^3$.  In particular, if Assumption \ref{zpass} holds, then all the structure constants $\alpha_K(p_1,\ldots,p_s;p)$ are determined by those of the form $\alpha(p_1,p_2;0)$ and $\alpha(p_1,p_2,p_3;0)$.
\end{thm}
\begin{proof}
To prove that $\Tr^{\vee}$ is injective, we will show that for any $f\in \wh{A}$, there exists some $p\in \?{P}$ such that $\Tr(f\vartheta_p)\neq 0$.  Pick some generic $Q\in N_{\bb{R}}\setminus \Supp(\f{D})$ so we can view $A_{\Theta}$ as a topological $P$-graded $A_0$-algebra $A_{\Theta,Q}$.  For nonzero $f\in A_{\Theta,Q}$, choose $p_0\in P$ such that $d(p_0)$, as defined in \eqref{dn}, is as small as possible subject to the condition that the degree $p_0$ part of $f$, denoted $f_{p_0}$, is nonzero.  Let $\?{p_0}=\pi_K(p_0)$.  By \eqref{zp}, $\vartheta_{-\?{p_0}}=z^{\varphi(-\?{p_0})}+$[terms with higher $d$].  So the degree $p_0+\varphi(-\?{p_0})$ part of $f$ is $f_{p_0}z^{\varphi(-\?{p_0})} \neq 0$.  Since $\pi_K(p_0+\varphi(-\?{p_0})) = 0\in \?{P}$, degree $p_0+\varphi(-\?{p_0})$ with respect to the $P$-grading implies degree $0$ with respect to the $\?{P}$-grading.  Hence, $\Tr(f\vartheta_{\?{p_0}})\neq 0$, as desired. 

For the remaining claims, suppose we want to determine the product of two elements $a,b\in \wh{A}$.  The above injectivity implies that it is enough to specify $\Tr(abc)=\Tr^2(ab,c)$ for each $c \in \wh{A}$, and this is equal to $\Tr^3(a,b,c)$.  The claim about the structure constants then follows because Assumption \ref{zpass} implies that the theta functions span (topologically), so knowing the multiplication rule for the theta functions determines the whole ring.
\end{proof}

\begin{rmk}[Frobenius algebras]
Recall that a Frobenius $R$-algebra is defined to be an $R$-algebra $A$, together with an $R$-algebra homomorphism $\Tr:A\rar R$, such that the map $\Tr^{\vee}:A\rar \Hom_R(A,R)$, $a\mapsto [b\mapsto \Tr(ab)]$, is an isomorphism.  This forces $A$ to be finite-dimensional.  If we allow $\Tr^{\vee}$ to instead be just injective, rather than an isomorphism, we could define infinite dimensional Frobenius algebras.  Such structures appear, for example, in \cite{BSS}.  Theorem \ref{TrThm} then says that $\Tr$ makes $A_{\Theta}$ into an infinite dimensional Frobenius $\wh{A}_K$-algebra.
\end{rmk}

\section{Tropical curves and the main results}\label{Main Section}

For use throughout this section, let us fix an initial scattering diagram $\f{D}_{\In}:=\{(m_{\f{d}_i},\f{d}_i,g_{\f{d}_i})|i\in I\}$ with $I$ a finite index-set  ($I$ here actually corresponds to $I\setminus F$ in the setup of Examples \ref{InExamples}) and $\f{d}_i=m_{\f{d}_i}^{\perp}$.  We can decompose $g_{\f{d}_i}$ as 
\begin{align}\label{gdi}
    g_{\f{d}_i}=\sum_{j\geq 1} g_{ij}\in \f{g}^{\parallel}_{n_{\f{d}_i},m_{\f{d}_i}}
\end{align}
with $g_{ij}\in \f{g}_{jn_{\f{d}_i}}$ ($j$ and $n_{\f{d}_i}$ being multiplied in this subscript).  For example, $\f{D}_{\In}$ could be any of the initial scattering diagrams from Examples \ref{InExamples}.  Let $\f{D}:=\scat(\f{D}_{\In})$ as in Theorem \ref{KSGS}.  We will describe $\f{D}$ and the associated theta functions in terms of counts of tropical curves and tropical disks.

\subsection{Tropical curves and tropical disks}\label{TropBasics}

\begin{ntn}
For any weighted graph $\Gamma$, possibly with some $1$-valent vertices removed, we let $\Gamma^{[0]}$, $\Gamma^{[1]}$, and $\Gamma^{[1]}_{\infty}$ denote the vertices, edges, and non-compact edges,\footnote{If $\Gamma$ consists of single edge and no vertices, we view $\Gamma_{\infty}^{[1]}$ as including two elements, one for each unbounded direction.  This case without vertices often requires special treatment.} respectively.  By ``weighted,'' we mean that $\Gamma$ is equipped with a function $w:\Gamma^{[1]}\rar \bb{Z}_{\geq 1}$.  
\end{ntn}

Let $\?{\Gamma}$ be a weighted, connected, finite tree without bivalent vertices, and let $\Gamma$ be the complement of the $1$-valent vertices.  We mark the non-compact edges via $\epsilon:S \overset{\sim}{\rar} \Gamma^{[1]}_{\infty}$ for some finite index set $S$.  Given $i\in S$, let $E_i$ denote $\epsilon(i)$.  Let $L$ be a finite-rank lattice.

\begin{dfn}
A  parameterized marked tropical curve in $L_{\bb{R}}$ is the data $(\Gamma, \epsilon)$ as above, along with a proper continuous map $h:\Gamma\rar L_{\bb{R}}$ such that:
\begin{itemize}
\item For each $E\in \Gamma^{[1]}$, $h|_{E}$ is an embedding with image contained in an affine line of rational slope.
\item The following ``balancing condition'' holds for every $V\in \Gamma^{[0]}$: For each edge $E\in \Gamma^{[1]}$ containing $V$, let $u_{(V,E)}\in L\setminus \{0\}$ denote the primitive integral vector emanating from $V$ in the direction $h(E)$.  Then
\begin{align}\label{balance}
\sum_{\substack{E\in \Gamma^{[1]} \\ E\ni V}} w(E)u_{(V,E)} = 0.
\end{align}
\end{itemize}
Two parameterized marked tropical curves $h_i:\Gamma_i\rar L_{\bb{R}}$, $i=1,2$, are isomorphic if there is a homeomorphism $\phi:\Gamma_1\rar \Gamma_2$ respecting the weights, markings, and maps $h_i$.  A (rational) tropical curve is an isomorphism class of parameterized marked tropical curves.

A tropical disk is defined in nearly the same way, except that $\Gamma$ is equipped with a marked vertex $Q_{\out}$ which is allowed to have any valence (including being univalent or bivalnet).  Furthermore, $Q_{\out}$ is not required to satisfy the balancing condition.
\end{dfn}

The type of a tropical curve or disk is the data of the weighted marked graph $(\Gamma,\epsilon)$, along with the vectors $u_{(V,E)}$ for each $V\in \Gamma^{[0]}$ and $E\in \Gamma^{[1]}$ with $E\ni V$.  If $\Gamma$ has no vertices, the type includes the data of the two unbounded directions.

For each $i\in S$, let $u_{E_i}$ denote the primitive vector pointing in the unbounded direction of $h(E_i)$.  The  degree $\Delta$ of a marked tropical curve/disk $(h,\Gamma,\epsilon)$ is the map $\Delta:S\rar L$ taking $i\in S$ to $w(E_i)u_{E_i}\in L\setminus \{0\}$.  

Let $\A$ denote a collection $\{B_i\subset L_{\bb{R}}|i\in S\}$ of affine subspaces of $L_{\bb{R}}$ indexed by $S$, plus an additional affine subspace $B_{\out}$ if we are considering tropical disks rather than tropical curves.  We say that a tropical curve $(h,\Gamma,\epsilon)$ matches the constraints $\A$ if $h(E_i)\subset B_i$ for each $i\in S$.  Similarly for a tropical disk with the additional requirement that $h(Q_{\out})\in B_{\out}$.  We call the conditions imposed by $\A$ incidence conditions.

For $s\geq 1$, we say\footnote{Higher-valence conditions as a tropical analog of $\psi$-class conditions first appeared in \cite{Mi2}, with proofs of various descendant correspondence theorems appearing in \cite{Mr,GrP2,Over,AGr,MRud}.  The last two of these apply in particular to the tropical curve counts which appear here when working over $\f{h}$.} that a tropical disk satisfies the $\psi$-class condition $\psi_{Q_{\out}}^{s-2}$ if $$\val(Q_{\out}) \geq s.$$  Note that we can have $s=-1$ if $Q$ is univalent.

Let $\f{T}_{\Delta}(\A)$ denote the set of tropical curves of degree $\Delta$ satisfying incidence conditions $\A$.  Let $\f{T}'_{\Delta}(\A,s-2)$ denote 
the set of tropical disks satisfying  incidence conditions $\A$ and the $\psi$-class condition $\psi_{Q_{\out}}^{s-2}$.

\subsubsection{Degrees and incidence conditions coming from scattering diagrams}\label{DegIncScat}

Let $\ww:=(\ww_i)_{i\in I}$ be a tuple of weight vectors $\ww_i:=(w_{i1},\ldots,w_{il_i})$ with $0< w_{i1} \leq \ldots \leq w_{il_i}$, $w_{ij}\in \bb{Z}$.  For $\Sigma_{l_i}$ denoting the group of permutations of $\{1,\ldots,l_i\}$, let \[\Aut(\ww)\subset \prod_{i\in I} \Sigma_{l_i}\] be the group of automorphisms of the second indices of the weights $\ww_i$ which act trivially on $\ww$.

Recall the lattice $\?{N}=N/K=\pi_K(N)$ from \S \ref{scatter}.  We will consider tropical curves in $\?{N}_{\bb{R}}$.  Let $\Delta_{\ww}$ denote the tropical curve degree $$\Delta_{\ww}:I_{\ww}\rar N\setminus \{0\}$$ with $I_{\ww}:=\{(i,j)|i\in I, j=1,\ldots,l_i\}\cup \{\out\}$, $\Delta((i,j))=w_{ij}n_{\f{d}_i}$, and $\Delta(\out):=-n_{\out}$, where \begin{align*}
    n_{\out}:=\sum_{i,j} w_{ij}n_{\f{d}_i}.
\end{align*} 
Here, $\out$ is the label for an unbounded edge $E_{\out}$.  We will typically write $E_{(i,j)}$ as simply $E_{ij}$.

Now let $\pp:=(p_1,\ldots,p_s)$ denote an $s$-tuple of vectors in $\?{P}\setminus \{0\}$, $s\geq 1$.  We let $\Delta_{\ww,\pp}$ denote the tropical disk degree $$\Delta_{\ww,\pp}:I_{\ww,\pp}\rar N\setminus \{0\}$$ with $I_{\ww,\pp}:=\{(i,j)|i\in I, j=1,\ldots,l_i\}\cup \{1,\ldots,s\}$, $\Delta((i,j)):=w_{ij}n_{\f{d}_i}$, and $\Delta(k)=\varphi(p_k)$ for $k=1,\ldots,s$.

Given $n\in N_{\bb{R}}\setminus \{0\}$, let $\A_{\ww,n}$ denote the incidence conditions $\{B_{ij}|(i,j)\in I_{\ww}\}\cup \{B_{\out}\}$ with each $B_{ij}$ a generic translate of $\f{d}_i$, and with $B_{\out}$ a generic translate of the span of $n$ and $n_{\out}=\sum_{i,j} w_{ij}n_{\f{d}_i}$.

Similarly, given a generic point $Q\in N_{\bb{R}}\setminus \Supp(\f{D})$, we define the incidence conditions $\A_{\ww,\pp,Q}$ as follows: take $B_{ij}$'s as before, take $B_k:=N_{\bb{R}}$ for each $k=1,\ldots,s$ (i.e., there are no incidence conditions on the $E_k$'s), and after fixing the generic $B_{ij}$'s, take $B_{\out}$ to be a single point $rQ$ for $r\gg 0$ ($r$ sufficiently large relative to the distance of the $B_{ij}$'s from the origin).

With these conditions, $\f{T}_{\Delta_{\ww}}(\A_{\ww,n})$ and $\f{T}'_{\Delta_{\ww,\pp}}(\A_{\ww,\pp,Q},s-2)$ are finite, so we can ``count'' the elements of these sets after assigning appropriate multiplicities to each tropical curve.

We note that for generic incidence conditions, every vertex of the tropical curves/disks in these sets will be trivalent except for possibly $Q_{\out}$, which will be $s$-valent.  Furthermore, for the tropical disks, each of the $s$ components of $\Gamma\setminus Q_{\out}$ will consist of exactly one of the edges of the form $E_k$, $k=1,\ldots,s$.

\subsubsection{Multiplicities}\label{MultSection} 
We next define the multiplicities of the tropical curves/disks in $\f{T}_{\Delta_{\ww}}(\A_{\ww,n})$ and $\f{T}'_{\Delta_{\ww,\pp}}(\A_{\ww,\pp,Q},s-2)$.  While the general definition here is somewhat complicated (particularly the issue of signs), we will see in Remark \ref{SignSkewCases} and Examples \ref{MultExamples} that the computation can be simplified significantly in all the examples we care about.

Let us begin with $\Gamma \in \f{T}_{\Delta_{\ww}}(\A_{\ww,n})$ for some $n\in N_{\bb{R}}\setminus \{0\}$.  By thinking of the edges $E_{ij}\in \Gamma^{[1]}_{\infty}$ as being incoming edges and the edge $E_{\out}\in \Gamma^{[1]}_{\infty}$ as being an outgoing edge, we obtain a flow on $\Gamma$.   For each $E\in \Gamma^{[1]}$, let $u_E\in N$ be the primitive vector tangent to $E$ pointing in the opposite direction of the flow of $\Gamma$.   To each incoming edge $E_{ij}$ we associate the element $m_{\f{d}_i}\in \?{M}$ and the element $g_{iw_{ij}}\in \f{g}_{w_{ij}u_i,m_{\f{d}_i}}$ from the expansion of $g_{\f{d}_i}$ given in \eqref{gdi}.

We now use the flow to recursively associate, up to signs, an element $m_E\in \?{M}\subset M$ and an element $g_E\in \f{g}^{\parallel}_{w(E)u_E,m_E}$ to every edge $E\in \Gamma^{[1]}$ as follows: Suppose two edges $E_1$ and $E_2$ flow into a common vertex $V$, with edge $E_3$ flowing out of $V$, such that $E_1$ and $E_2$ have associated elements $m_{E_1},m_{E_2}\in \?{M}$ and $g_{E_1}\in \f{g}_{n_1,m_{E_1}}$, $g_{E_2}\in \f{g}_{n_2,m_{E_2}}$ for $n_1=w(E_1)u_{E_1}$ and $n_2=w(E_2)u_{E_2}$.  Then we define
\begin{align}\label{mE3}
    m_{E_3}:=\mu((n_1,m_{E_1}),(n_2,m_{E_2})) \in \?{M}
\end{align}
for $\mu$ as defined in \eqref{m}, and we define
\begin{align}\label{g1g2}
g_{E_3}:=[g_{E_1},g_{E_2}]\in \f{g}_{n_1+n_2,m_{E_3}},
\end{align}
where the containment in $\f{g}_{n_1+n_2,m_{E_3}}$ utilizes Condition \eqref{gnm}. Note that reordering $E_1$ and $E_2$ will change the signs of both $m_{E_3}$ and $g_{E_3}$ above.  

This flow process determines (up to simultaneously changing both signs) elements
\begin{align}\label{mgGamma}
  m_{\Gamma}:=m_{E_{\out}}\in \?{M}  \hspace{.25 in} \mbox{and} \hspace{.25 in} g_{\Gamma}:=g_{E_{\out}}\in \f{g}_{n_{\out},m_{E_{\out}}}
\end{align} associated to the outgoing edge of $\Gamma$.

Now, suppose $n=\varphi(p)$ for some $p\in \?{P}$.  If $n\notin m_{\Gamma}^{\perp}$, we define
\begin{align}\label{MultGamma0}
    \Mult(\Gamma):=\sign \langle n,m_{\Gamma}\rangle (g_{\Gamma}\cdot z^n)\in A.
\end{align}
where $\cdot$ is the action of $\f{g}$ on $A$.  If $n$ is in $m_{\Gamma}^{\perp}$, we take $\Mult(\Gamma):=0$ (which in practice is typically equal to $g_{\Gamma}\cdot z^n$ in this case anyway).  Note that the factor $\sign \langle n,m_{\Gamma}\rangle$ makes up for the ambiguity in the ordering of the edges $E_1$ and $E_2$ above. 

Now suppose $\Gamma \in \f{T}'_{\Delta_{\ww,\pp}}(\A_{\ww,\pp,Q},s-2)$.  
We use a flow on $\Gamma$ like before, this time with all unbounded edges being sources and $Q_{\out}$ being the sink.  We again associate $g_{iw_{ij}}\in \f{g}$ and $m_{\f{d}_i}\in M$ to $E_{ij}$ for each $(i,j)$, and we associate $z^{\varphi(p_i)}\in A$ to $E_k\in \Gamma^{[1]}_{\infty}$ for $k=1,\ldots,s$.  

We now recursively assign to every edge $E$ either elements $m_E$ and $g_E$ as before, or an element $a_E\in A_{w(E)u_E}$.  When two edges with associated elements of $\?{M}$ and $\f{g}$ flow into a vertex, outgoing elements in $\?{M}$ and $\f{g}$ are determined as before.     On the other hand, if $E_1,E_2$ flow into a vertex, and $E_1$ has associated elements $m_E\in u_{E_1}^{\perp}$ and $g_{E_1}\in \f{g}_{w(E_1)u_{E_1},m_E}$, while $E_2$ has associated element $a_{E_2}\in A_{w(E_2)u_{E_2}}$, we associate to the outgoing edge $E_3$ the element 
\begin{align}\label{g1a2}
a_{E_3}:=\sign\langle u_{E_2},m_{E_1}\rangle (g_{E_1}\cdot a_{E_2})\in A_{w(E_3)u_{E_3}}.
\end{align}
We note that $g_{E_1}\cdot a_{E_2}$ above may be viewed as a bracket $[g_{E_1},a_{E_2}]$ as in \eqref{Ag}, so \eqref{g1a2} is indeed analogous to \eqref{g1g2}.

Now, for $k=1,\ldots,s$, let $E_{\out,k}$ denote the edge of $\Gamma$ containing $Q_{\out}$ which is in the same connected component of $\Gamma\setminus Q_{\out}$ as $E_k$.  Then
\begin{align}\label{MultDisk}
    \Mult(\Gamma):=a_{E_{\out,1}}a_{E_{\out,2}}\cdots a_{E_{\out,s}}\in A_{n_{\out}}\subset A,
\end{align}
 where now, $n_{\out}=\sum_{\ell \in I_{\ww,\pp}} \Delta_{\ww,\pp}(\ell) = \sum_{(i,j)} w_{ij}n_{\f{d}_i} + \sum_k \varphi(p_k)$.

\begin{rmk}[Signs of multiplicities in skew-symmetric cases]\label{SignSkewCases}
The sign issues in the multiplicity definitions above can be simplified when $\f{g}$ is skew-symmetric---there is a canonical choice of ordering for the commutators and an easy way to find each $m_E$ when using this choice.  Recall that we call $\f{g}$ skew-symmetric if there is a skew-symmetric form $\omega=\{\cdot,\cdot\}$ on $N$ such that $[\f{g}_{n_1},\f{g}_{n_2}]=0$ whenever $\{n_1,n_2\}=0$.  Using Example \ref{equivD}(i), we assume that $m_{\f{d}_i}=\omega_1(n_{\f{d}_i})$ for every wall $\f{d}_i\in \f{D}_{\In}$.  Then, when choosing an ordering for a commutator as in \eqref{g1g2} above, pick $[g_{E_1},g_{E_2}]$ if $\{u_{E_1},u_{E_2}\}\geq 0$ and take the reverse ordering otherwise.  With these choices, one checks that $m_E$ is always given by $\omega_1(w(E)u_E)$.  Hence, the factor $\sign \langle n,m_{\Gamma}\rangle$ of \eqref{MultGamma0} is simply $\sign(\{u_E,n\})$.  Similarly, the factor $\sign(\langle u_{E_2},m_{E_1}\rangle)$ from \eqref{g1a2} is simply $\sign(\{u_{E_1},u_{E_2}\})$. 
\end{rmk}

We now explain how $\Mult(\Gamma)$ can be computed more simply in the cases of Examples \ref{gegs}(i)-(iii).

\begin{egs}\label{MultExamples} ~

\begin{enumerate}[label=(\roman*)]\setlength\itemsep{1em}
\item An alternative approach to scattering diagrams over $\f{h}$ as in Example \ref{gegs}(i) (cf. \cite{GPS}) is to attach not an element of $\wh{\f{h}}$ to each wall, but rather, an element of $R\llb N^{\oplus}\rrb$.  In this perspective, a wall is expressed as $(\f{d},f_{\f{d}})$, $f_{\f{d}}\in R\llb N^{\oplus}\rrb$.  Letting $m_{\f{d}}$ be either primitive element of $\?{M}$ which vanishes on $\f{d}$, the wall $(\f{d},f_{\f{d}})$ would in our approach be written as $(m_{\f{d}},\f{d},f_{\f{d}}\partial_{m_{\f{d}}})$.  

Now, suppose that the elements of $\f{h}$ attached to the initial edges $E_{ij}$ have the form $g_{iw_{ij}}=a_{iw_{ij}}z^{w_{ij}e_i}\partial_{m_{\f{d_i}}}$ for some constants $a_{iw_{ij}}\in R$.  Let $a_{\ww}:=\prod_{i,j} a_{iw_{ij}}$.  If $\Gamma\in \f{T}_{\Delta_{\ww}}(\A_{\ww,n})$, then $$g_{\Gamma}=a_{\ww} z^{n_{\out}} \partial_{m_{\Gamma}},$$ and so if $n\in \varphi(P)$,
\begin{align}\label{hCurveMult}
    \Mult(\Gamma) = a_{\ww} |\langle n,m_{\Gamma} \rangle|z^{n+n_{\out}}.
\end{align}
Note that the computation of \eqref{hCurveMult} does not actually require knowing the sign of $m_{\Gamma}$.

Similarly, for $\Gamma\in \f{T}'_{\Delta_{\ww,\pp}}(\A_{\ww,\pp,Q},s-2)$, we can compute $\Mult(\Gamma)$ using iterated commutators and actions $g_{E_1}\cdot a_{E_2}$ as in \eqref{g1a2} without worrying about signs: the product in \eqref{MultDisk} will be an element of the form \begin{align}\label{hDiskMult}
    a_{\ww}kz^{n_{\out}}
\end{align} for some $k\in \bb{Z}$, and the correct sign choices will result in $k$ being non-negative.

It follows from joint work of the author and H. Ruddat \cite{MRudMult} that the factor $|\langle n,m_{\Gamma} \rangle|$ in \eqref{hCurveMult}, and the factor $|k|$ of \eqref{hDiskMult} (in the case $n_{\out}=0$ so we have honest tropical curves) are the correct multiplicities for counting tropical curves if one wishes for the counts to equal the appropriate corresponding Gromov-Witten invariants.  Furthermore, the factors $a_{\ww}$ are related to counts of multiple covers of certain $(-1)$-curves in a degeneration of a certain blowup of a toric variety.  This is used in the author's proof of the Frobenius structure conjecture for cluster varieties \cite{ManFrob}.

\item In the case that $\f{g}=\f{g}^{\omega}\subset \f{h}$ as in Example \ref{gegs}(ii), the multiplicity formula simplifies further.  For each vertex $V\neq Q_{\out}$ with edges $E_1,E_2$ containing $V$, $n_1:=w(E_1)u_{E_1}$ and $n_2:=w(E_2)u_{E_2}$, define \begin{align*}
\Mult(V):=|\{n_1,n_2\}|.
\end{align*}
For $\Gamma\in \f{T}_{\Delta_{\ww}}(\A_{\ww,n})$, using Remark \ref{SignSkewCases}, one finds that
\begin{align*}
    \Mult(\Gamma) = a_{\ww} \left(\prod_{V\in \Gamma^{[0]}} \Mult(V)\right)|\{n_{\out},n\}|z^{n+n_{\out}}.
    \end{align*}
For $\Gamma\in \f{T}'_{\Delta_{\ww,\pp}}(\A_{\ww,\pp,Q},s-2)$, define $\Mult(Q_{\out})=1$.  Then
\begin{align*}
    \Mult(\Gamma) = a_{\ww} \left(\prod_{V\in \Gamma^{[0]}} \Mult(V)\right)z^{n_{\out}}.
\end{align*}

\item Similarly for the quantization $\f{g}=\f{g}^{\omega}_q$: For $V\neq Q_{\out}$, take $\Mult_q(V):=[|\{n_1,n_2\}|]_q$, where $n_1$ and $n_2$ are weighted tangent vectors of edges containing $V$, and we recall $[a]_q$ denotes $q^a-q^{-a}$.  Then for $\Gamma\in \f{T}_{\Delta_{\ww}}(\A_{\ww,n})$ we have
\begin{align*}
    \Mult(\Gamma) = a_{\ww} \left(\prod_{V\in \Gamma^{[0]}} \Mult_q(V)\right)[|\{n_{\out},n\}|]_q z^n.
\end{align*}
For $\Gamma\in \f{T}'_{\Delta_{\ww,\pp}}(\A_{\ww,\pp,Q},s-2)$, define
\begin{align*}
    \Mult_q(Q_{\out})=q^{\sum \{n_i,n_j\}}
\end{align*}
where the sum is over all pairs $i,j\in \{1,\ldots,s\}$ with $i<j$, and $n_k:=w(E_{\out,k})u_{E_{\out,k}}$.  Equivalently, $\Mult_q(Q_{\out})$ is determined by $z^{n_1} z^{n_2} \cdots z^{n_s} = \Mult_q(Q_{\out}) z^{n_{\out}}$. Then
\begin{align}\label{Multq}
    \Mult(\Gamma) = a_{\ww} \left(\prod_{V\in \Gamma^{[0]}} \Mult_q(V)\right)z^{n_{\out}}.
\end{align}
We note that (after removing the $a_{\ww}$-factors and $z^n$ or $z^{n_{\out}}$ factors) these quantum multiplicities extend the Block-G\"ottsche multiplicities of \cite{BG} to allow for these higher-dimensional cases with $\psi$-class conditions.  
\end{enumerate}
\end{egs}

With these multiplicities,  we can define
\begin{align*}
    \N^{\trop}_{\ww}(p):=\sum_{\Gamma \in \f{T}_{\Delta_{\ww}}(\A_{\ww,\varphi(p)})} \Mult(\Gamma)
\end{align*}
for each $p\in \?{P}\setminus \{0\}$, and
\begin{align*}
    \N^{\trop}_{\ww,\pp}(Q):=\sum_{\Gamma \in \f{T}'_{\Delta_{\ww,\pp}}(\A_{\ww,\pp,Q},s-2)} \Mult(\Gamma)
\end{align*}
for each generic $Q\in N_{\bb{R}}\setminus \Supp(\f{D})$.  Also, for $n\in N^+$ primitive, $\ww\in \s{W}(n)$, $m\in n^{\perp}\cap M$, and $n_0\in N_{\bb{R}}\setminus \{0\}$, we define
\begin{align*}
    \N^{\trop}_{\ww}(n_0; m):=\sum_{\substack{\Gamma \in \f{T}_{\Delta_{\ww}}(\A_{\ww,n_0}) \\ m_{\Gamma} \in \bb{R} m}} \sign(m_{\Gamma}/m) g_{\Gamma}.
\end{align*}
Here $m_{\Gamma}$ and $g_{\Gamma}$ are given as in \eqref{mgGamma}, and $\sign(m_{\Gamma}/m)$ is defined to be $+1$ if $m_{\Gamma}$ is a positive multiple of $m$ and $-1$ otherwise.

\begin{prop}\label{TropInvariant}
The quantities $\N^{\trop}_{\ww}(n_0;m)$ and $\N^{\trop}_{\ww}(p)$ do not depend on the choices of generic representatives of the incidence conditions $\A$.  For fixed $Q$, $\N^{\trop}_{\ww,\pp}(Q)$ does not depend on the generic choices of representatives for the conditions $\{B_{ij}\}_{ij}$.  If $n_{\out}:=\sum_{(i,j)} w_{ij}n_{\f{d}_i} + \sum_k \varphi(p_k)$ is contained in $K$, then $\N^{\trop}_{\ww,\pp}(Q)$ is also independent of the generic choice of $Q$.
\end{prop}
\begin{proof}
The invariance of $\N^{\trop}_{\ww}(n_0;m)$, $\N^{\trop}_{\ww}(p)$, and $\N^{\trop}_{\ww,\pp}(Q)$ (for fixed $Q$) will follow as immediate corollaries of Theorem \ref{ScatTropDisks}, Corollary \ref{ScatTropDisksCor}, and Theorem \ref{MainTheorem}, respectively.  The final statement will follow once we prove Theorem \ref{CPS} since all wall-crossings act trivially on $A_K$.
\end{proof}

\begin{rmk}
An earlier version of this paper (arXiv v3) claimed a direct proof of Proposition \ref{TropInvariant} rather than realizing it as a corollary of the results below.  However, that argument had a flaw, namely, the claim of Footnote 11 in that version is nontrivial, and in fact is false without our Condition \eqref{gnm} which was not present in that version.  However, the key ideas of that argument, plus a proof of the flawed footnote for some cases, will still be used in \S \ref{CPSproof} to prove Theorem \ref{CPS}.
\end{rmk}

For each $n\in N$, let $\s{W}_{\pp}(n)$ be the set of weight vectors $\ww$ such that
\begin{align*}
n_{\out}:=\sum_{i,j} w_{ij} n_{\f{d}_i} + \sum_{k=1}^s \varphi(p_k) = n.
\end{align*}
We will write just $\s{W}(n)$ for the cases where $\pp$ is empty (i.e., when considering tropical curves in $\f{T}_{\Delta_{\ww}}(\A_{\ww,n_0})$ for some $n_0$), so $\s{W}(n)$ is the set of weight vectors such that $n_{\out}:=\sum_{i,j} w_{ij}n_{\f{d}_i} = n$.

We are now ready to state the main theorems.

\begin{thm}\label{ScatTropDisks}
For $n\in N^+$ primitive and $m\in (n^{\perp}\cap M)\setminus \{0\}$, let $\f{D}(n,m)$ be the set of walls in $\f{D}$ with direction $-n$ and support parallel to $m^{\perp}$, i.e., walls $(m_{\f{d}},\f{d},g_{\f{d}})$ with $m_{\f{d}}\in \bb{Q} m$ and $g_{\f{d}}\in \f{g}_{n}^{\parallel}$.  By applying the equivalence of Example \ref{equivD}(i), assume that each $m_{\f{d}}$ here is in fact a positive rational multiple of $m$.  Let $n_0\in N_{\bb{R}}\setminus m^{\perp}$.  Then
\begin{align}\label{ScatTropDisksFormula}
\sum_{\f{d}\in \f{D}(n,m)} g_{\f{d}} = \sum_{\substack{k>0  \\ \ww \in \s{W}(kn)}} \frac{N^{\trop}_{\ww}(n_0;m)}{ |\Aut(\ww)|}.
\end{align}
\end{thm}

If every wall in $\f{D}(n,m)$ has the same support, then the sum on the left-hand side of \eqref{ScatTropDisksFormula} appears when combining the walls into a single wall via the equivalence of Example \ref{equivD}(ii).  This is the motivation for considering such an expression.

From the definition of the multiplicity of tropical curves in $\f{T}_{\Delta_{\ww}}(\A_{\ww,n})$, we immediately obtain the following as a corollary of Theorem \ref{ScatTropDisks}.

\begin{cor}\label{ScatTropDisksCor}
For primitive $n\in N^+$ and any $p\in \?{P}$, let 
\begin{align*}
    g_{n,p}:=\sum_{\substack{(m_{\f{d}},\f{d},g_{\f{d}})\in \f{D}\\ n_{\f{d}}=n}} \sign(p,m_{\f{d}}) g_{\f{d}}\cdot z^{\varphi(p)}.
\end{align*}
The sum here is over all walls of $\f{D}$ with direction $-n$.  I.e., $\exp(g_{n,p})$ is the image of $z^{\varphi(p)}$ under the automorphism associated with crossing these walls while moving in the direction $-\varphi(p)$.  Then 
\begin{align*}
g_{n,p} = \sum_{\substack{k> 0 \\ \ww \in \s{W}(kn)}} \frac{\N^{\trop}_{\ww}(p)}{|\Aut(\ww)|}. 
\end{align*}
\end{cor}

Now recall the structure constants $\alpha_Q(p_1,\ldots,p_s;p)$ of \eqref{alphaQ}.

\begin{thm}[Main Theorem]\label{MainTheorem}
For $\pp=(p_1,\ldots,p_s)$ an $s$-tuple of elements of $\?{P}\setminus \{0\}$, $s\geq 1$, $p\in \?{P}$, and $Q\in N_{\bb{R}}$ generic, we have 
\begin{align}\label{MainTrop}
\alpha_Q(p_1,\ldots,p_s;p) = \sum_{r\in K\cap P} \left(\sum_{\ww\in  \s{W}_{\pp}(\varphi(p)+r)}   \frac{\N^{\trop}_{\ww,\pp}(Q)}{|\Aut(\ww)|}\right).
\end{align}
\end{thm}

\begin{rmk}[Tropical ribbons]\label{ribbon}
We note that each of the above results can be restated using tropical ribbons in place of tropical curves/disks.  By a tropical ribbon, we mean the data of a tropical curve or disk, plus the additional data of a cyclic ordering of the edges at each vertex (cf. the ribbon trees and ribbon graphs of \cite{GSTheta,Abo,Slaw} for applications of such objects in related contexts).  Let us view $\f{g}$ as part of the commutator Lie algebra of some associative algebra $U_{\f{g}}$ (e.g., the universal enveloping algebra of $\f{g}$).  Then, in the definition of the multiplicities in \S \ref{MultSection}, in place of the commutator $[g_{E_1},g_{E_2}]$ of \eqref{g1g2}, we take the product $g_{E_1}g_{E_2}$ if the ordering $E_1,E_2,E_3$ agrees with the ribbon structure at $V$ and the product $-g_{E_2}g_{E_1}$ if it does not (keeping $m_{E_3}$ defined as in \eqref{mE3}).  Similarly for $\eqref{g1a2}$, viewing $\f{g}\oplus A$ now as part of the commutator Lie algebra of some associative algebra $U_{\f{g}\oplus A}$.  That is, we take $a_{E_3}$ there to be $\sign(\langle u_{E_2},m_{E_1}\rangle) g_{E_1} a_{E_2}$ if the ribbon structure agrees with the ordering $E_1,E_2,E_3$, and $-\sign(\langle u_{E_2},m_{E_1}\rangle) a_{E_2}g_{E_1}$ otherwise, where the products here are in $U_{\f{g}\oplus A}$.  The ribbon structure at $Q$ is taken to be the one induced by the ordering of the theta functions being multiplied.  It is then clear that the multiplicity of a tropical curve as in \S \ref{MultSection} is the same as the sum of the multiplicities of all the associated tropical ribbons with these new ribbon multiplicities.  In some applications, e.g., over the quantum torus algebra and over the Hall algebra, it may be preferable to use this ribbon perspective because the ribbon multiplicities will have a more natural geometric interpretation than the tropical multiplicities.
\end{rmk}

\subsection{Factored, perturbed, and asymptotic scattering diagrams}\label{ScatterFactor}

\subsubsection{Factoring and perturbing the initial scattering diagram}\label{ScatterFactorInitial}

To prove Theorems \ref{ScatTropDisks} and \ref{MainTheorem}, we extend and build off ideas from \cite[\S 1.4-\S 2]{GPS}.
\begin{dfn}\label{as}
For any scattering diagram $\f{D}$, the asymptotic scattering diagram $\f{D}_{\as}$ of $\f{D}$ is defined as follows: Every wall $(m_{\f{d}},n+\f{d},g_{\f{d}})\in \f{D}$, with $\f{d}$ denoting a rational polyhedral cone (apex at the origin) and $n \in N_{\bb{R}}$ translating this cone, is replaced by the wall $(m_{\f{d}},\f{d},g_{\f{d}})$.  
\end{dfn}

Note that
\begin{align*}
\scat(\f{D}_{\as}) = (\scat(\f{D}))_{\as}.
\end{align*}
We will use the technique from \cite{GPS} in which one factors an initial scattering diagram $\f{D}_{\In}$, deforms the factored scattering diagram by moving the supports of the initial walls, constructs $\scat$ of the deformed scattering diagram, and then takes the asymptotic scattering diagram to obtain $\scat(\f{D}_{\In})$.

Let $T$ denote the commutative polynomial ring $\bb{Z}[t_i|i\in I]$, and let $T_k:= T/ \langle t_i^{k+1}|i\in I\rangle$.   
 Let $\f{D}_{\In,T_k}$ and $\f{D}_{\In,T}$ be the initial scattering diagrams over $\f{g} \otimes T_k$ and $\f{g}\otimes T$, respectively, given by replacing each $g_{\f{d}_i}=\sum_{j\geq 1} g_{ij}$ from $\f{D}_{\In}$ with $g'_{\f{d}_i}:=\sum_{j\geq 1} t_i^jg_{ij}$.

 We will show that Theorems \ref{ScatTropDisks} and \ref{MainTheorem} hold for $\f{D}_{T_k}:=\scat(\f{D}_{\In,T_k})$ for all $k$, hence for $\f{D}_T:=\scat(\f{D}_{\In,T})$.  Taking $t_i= 1$ for each $i$ then recovers the theorems for $\f{D}=\scat(\f{D}_{\In})$.

We have an inclusion of commutative rings
\begin{align*}
T_k&\hookrightarrow T'_k:=\bb{Z}[\?{u}_{ij}|i\in I, 1\leq j \leq k]/\langle \?{u}_{ij}^2|i\in I, 1\leq j \leq k\rangle\\
         t_i&\mapsto \sum_{j=1}^k \?{u}_{ij}.
\end{align*}
Using this inclusion to work in $\f{g}\otimes T'_k$, we have 
\begin{align}\label{Rwdiq}
g'_{\f{d}_i}= \sum_{w= 1}^k t_i^jg_{ij} = \sum_{w=1}^k \sum_{\#J=w} w! g_{iw} \?{u}_{iJ},
\end{align}
where the second sum is over all subsets $J\subset \{1,\ldots,k\}$ of size $w$, and
\begin{align*}
\?{u}_{iJ}:=\prod_{j\in J} \?{u}_{ij}.
\end{align*}
Applying the equivalence from Examples \ref{equivD}(ii) in reverse and then perturbing the walls (i.e., translating the walls by some generic amount), we obtain a scattering diagram 
\begin{align}\label{PertIn0}
\?{\f{D}}_k^0:= \{(m_{\f{d}_i},\f{d}_{iwJ},w! g_{iw} \?{u}_{iJ}) | 1\leq w \leq k, J\subset \{1,\ldots,k\}, \#J=w\},
\end{align}
where $\f{d}_{iwJ}$ is some generic translation of $\f{d}_i=m_{\f{d}_i}^{\perp}$.  Note that $\scat(\?{\f{D}}_k^0)_{\as} = \f{D}_{T_k}$.

It shall be useful for us to work over a new commutative ring $\wt{T}_k$, defined by
\begin{align*}
    \wt{T}_k:=\bb{Z}[u_{iJ}|i\in I, J\subset \{1,\ldots,k\}]/\langle u_{iJ_1}u_{iJ_2}|J_1 \cap J_2 \neq \emptyset \rangle.
\end{align*}
Note that we have a surjective homomorphisms 
\begin{align}\label{pi}
  \pi:\wt{T}_k \rar T'_k, \hspace{.25 in}  u_{iJ} \mapsto \?{u}_{iJ}.
\end{align}
Let $\f{D}_k^0$ denote the initial scattering diagram over $\f{g}\otimes \wt{T}_k$ defined as in \eqref{PertIn0}, but with the factors $\?{u}_{iJ}$ replaced by $u_{iJ}$, i.e.,
\begin{align}\label{PertIn}
\f{D}_k^0:= \{(m_{\f{d}_i},\f{d}_{iwJ},w! g_{iw} u_{iJ}) | 1\leq w \leq k, J\subset \{1,\ldots,k\}, \#J=w\}.
\end{align}

\subsubsection{Constructing the consistent scattering diagram $\f{D}_k^{\infty}$}\label{Dkinfty}

As in \cite[\S 1.4]{GPS}, we now produce a sequence of scattering diagrams $\f{D}_k^0,\f{D}_k^1,\f{D}_k^2,\ldots,\f{D}_k^{k\#I-1}=:\f{D}_k^{\infty}= \scat(\f{D}_k^0)$.  Assume inductively that:
\begin{itemize}
\item[(a)] Each wall in $\f{D}_k^i$ is of the form $(m_{\f{d}},\f{d},g_{\f{d}} u_{\JJ_{\f{d}}})$, where $g_{\f{d}} \in \f{g}_{n_{\f{d}}}$ for some $n_{\f{d}}\in N^+$, $\JJ_{\f{d}}$ is a collection of pairwise-disjoint subsets of $I\times \{1,\ldots,k\}$ of the form $(i,J)$ for various $i\in I$ and $J\subset \{1,\ldots,k\}$, and
\begin{align}\label{uJ}
u_{\JJ_{\f{d}}}:=\prod_{(i,J)\in \JJ_{\f{d}}} u_{iJ}.
\end{align}
\item[(b)] There is no set $W$ of walls in $\f{D}_k^i$ of cardinality $\geq 3$ such that $\bigcap_{\f{d}\in W} \f{d}$ has codimension $\leq 2$ and $u_{\JJ_{\f{d}_1}} u_{\JJ_{\f{d}_2}}\neq 0$ for each pair of distinct walls $\f{d}_1,\f{d}_2\in W$.  
\end{itemize}
These conditions clearly hold for $\f{D}_k^0$.  To get $\f{D}_k^l$ from $\f{D}_k^{l-1}$, consider each pair $\f{d}_1,\f{d}_2\in \f{D}_k^{l-1}$ which satisfies:
\begin{itemize}
\item[(i)] $\{\f{d}_1,\f{d}_2\} \nsubseteq  \f{D}_k^{l-2}$,
\item[(ii)] $\f{d}_1\cap \f{d}_2\neq \emptyset$ has codimension $2$ and is not contained in the boundary of either $\f{d}_1$ or $\f{d}_2$,
\item[(iii)] $u_{\JJ_{\f{d}_1}}u_{\JJ_{\f{d}_2}} \neq 0$.
\end{itemize}
Given such a pair, Lemma \ref{PentagonScatter} says that adding the following new wall will result in consistency around the joint $\f{d}_1\cap \f{d}_2$ (i.e., path-ordered products around this joint will be trivial):
\begin{align}\label{Parents}
\f{d}(\f{d}_1,\f{d}_2):=(\mu((n_{\f{d}_1},m_{\f{d}_1}),(n_{\f{d}_2},m_{\f{d}_2})), (\f{d}_1\cap\f{d}_2)+\bb{R}_{\leq 0} (n_{\f{d}_1}+n_{\f{d}_2}), [g_{\f{d}_1}u_{\JJ_{\f{d}_1}},g_{\f{d}_2}u_{\JJ_{\f{d}_2}}]).
\end{align}
We now define
\begin{align*}
\f{D}_k^l:= \f{D}_k^{l-1}\cup \{\f{d}(\f{d}_1,\f{d}_2)|\f{d}_1,\f{d}_2 \mbox{ satisfying (i)-(iii) above}\}.
\end{align*}

\begin{dfn}
If $\f{d}=\f{d}(\f{d}_1,\f{d}_2)$, define $\Parents(\f{d}):=\{\f{d}_1,\f{d}_2\}$, and if $\f{d}\in \f{D}_k^0$, define $\Parents(\f{d}):=\emptyset$.  Recursively define $\Ancestors(\f{d})$ by $\Ancestors(\f{d}):=\{\f{d}\} \cup \bigcup_{\f{d}'\in \Parents(\f{d})} \Ancestors(\f{d}')$.  Define
\begin{align*}
\Leaves(\f{d}):=\{\f{d}'\in \Ancestors(\f{d})|\f{d}' \mbox{~is the support of a wall in~} \f{D}_k^0 \}.
\end{align*}
\end{dfn}

It is clear that $\f{D}_k^l$ satisfies inductive hypothesis (a).  For hypothesis (b), suppose we do have such a bad set of walls $W$.  Since the products $u_{\f{d}_1}u_{\f{d}_2}$ are nonzero for each $\f{d}_1,\f{d}_2\in W$, the sets $\Leaves(\f{d})$ for $\f{d}\in W$ must be pairwise disjoint.  Thus, slightly shifting the initial walls' supports will shift the walls in $W$ independently, and so we can avoid having this bad set $W$ by choosing the walls $\f{d}_{iwJ}$ more generically.  

For each $\JJ=\{(i,J_{ij})\subset I\times \{1,\ldots,k\}\}_{ij}$, let 
\begin{align}\label{IJ}
    I_{\JJ}:=\bigcup_{(i,J)\in \JJ} (i,J) \subset I\times \{1,\ldots,k\}.
\end{align}
Since the cardinality of $I_{\JJ_{\f{d}}}$ for the new walls increases with each step and is bounded by $k\#I$, we see that the process stabilizes with the scattering diagram  $\f{D}_k^{k\#I-1}$, so we denote $\f{D}_k^{\infty}:=\f{D}_k^{k\#I-1}$.  We check the consistency of $\f{D}_k^{\infty}$ at the end of \S \ref{TropDkinfty}.

\subsubsection{The tropical description of $\f{D}_k^{\infty}$}\label{TropDkinfty}

We will continue to use $\JJ$ to denote collections of pairwise-disjoint sets $$\JJ=\{(i,J_{ij})\subset I\times \{1,\ldots,k\}\}_{ij}$$ as in inductive hypothesis (a) of \S \ref{Dkinfty} above.  We denote $u_{\JJ}$ as in \eqref{uJ} and $I_{\JJ}$ as in \eqref{IJ}.

Now, as in \S \ref{DegIncScat}, fix a weight vector $\ww:=(\ww_i)_{i\in I}$, $\ww_i:=(w_{i1},\ldots,w_{il_i})$ with $0< w_{i1} \leq \ldots \leq w_{il_i}$.  Let $\JJ_{\ww}$ denote the set of all possible collections $\JJ$ as above, subject to the requirement that $\#J_{ij}=w_{ij}$.  Note that each $\JJ\in \JJ_{\ww}$ corresponds to a set $\f{D}_{k,\JJ}^0=\{\f{d}_{iw_{ij}J_{ij}}\}_{ij}$ of walls of $\f{D}_k^0$, and two choices of $\JJ$ correspond to the same $\f{D}_{k,\JJ}^0$ exactly if they are related by an element of $\Aut(\ww)$.  Let $\f{D}_{k,\JJ}^{\infty}$ denote the set of walls in $\f{D}_k^{\infty}$ whose leaves are precisely the walls of $\f{D}_{k,\JJ}^0$.  Note that, for  $\JJ\in \JJ_{\ww}$ and $(m_{\f{d}},\f{d},g_{\f{d}}u_{\JJ}) \in \f{D}_{k,\JJ}^{\infty}$, we must have $g_{\f{d}} \in \f{g}_{n_{\out}}$ where $n_{\out}:=\sum_{i,j} w_{ij}n_{\f{d}_i}$.

We will write $\A_{\ww,n,\JJ}$ and $\A_{\ww,\pp,Q,\JJ}$ to indicate that we are choosing the representatives of the incidence conditions $\A_{\ww,n}$ and $\A_{\ww,\pp,Q}$ so that $B_{ij}=\f{d}_{iw_{ij}J_{ij}}$.  Recall that for $n_0\in N_{\bb{R}}\setminus \{0\}$, the condition $B_{\out}$ from $\A_{\ww,n_0}$ is a generic translate of the $\bb{R}$-span of $n_0$ and $n_{\out}$.  In particular, if $n_0\notin m_{\f{d}}^{\perp}\ni n_{\out}$, then $B_{\out}\cap \f{d}$ is a ray or a line.  The following is essentially a generalization of \cite[Thm 2.4]{GPS} (which used $\f{g}=\f{h}$ and dimension $2$) to higher dimensions and more general $\f{g}$ (the two-dimensional case over $\f{g}^{\omega}_q$ is \cite[Lemmas 4.5-4.6]{FS}).

\begin{lem}\label{ScatTropBij}
For every wall $(m_{\f{d}},\f{d},g_{\f{d}}u_{\JJ})\in \f{D}_{k,\JJ}^{\infty}$, there exists a unique tropical curve $h:\Gamma \rar N_{\bb{R}}$ in $\f{T}_{\Delta_{\ww}}(\A_{\ww,n_0,\JJ})$ with $h(E_{\out})\subset \f{d}$, where $n_0$ is any element of $N_{\bb{R}}\setminus m_{\f{d}}^{\perp}$.  Furthermore, $h(\partial E_{\out}) \in \partial \f{d}$, and up to an equivalence as in Example \ref{equivD}(i) (plus possibly a positive re-scaling of $m_{\f{d}}$), we have 
\begin{align}\label{BreakCoeff}
m_{\f{d}}=m_{\Gamma}\in \?{M}  \hspace{.25 in} \mbox{and} \hspace{.25 in} g_{\f{d}}:=g_{\Gamma}\prod_{ij} (w_{ij}!)
\end{align} for $m_{\Gamma}$ and $g_{\Gamma}$ as defined in \eqref{mgGamma}.  
\end{lem}
\begin{proof}
The proof of \cite[Theorem 2.4]{GPS} is easily modified to prove this Lemma.  The idea is to construct the tropical curve by starting with the ray $\f{d}\cap B_{\out}$ and considering the endpoint $p\in \f{d}_1\cap \f{d}_2$, where $\{\f{d}_1,\f{d}_2\}=\Parents(\f{d})$.  The resulting stratum is given weight $|n_{\f{d}}|$ (the index of $n_{\f{d}}$, i.e., $n_{\f{d}}$ equals $|n_{\f{d}}|>0$ times a primitive vector), where $g_{\f{d}}\in \f{g}_{n_{\f{d}}}$.  From $p$, extend the tropical curve in the directions $n_{\f{d}_1}$ and $n_{\f{d}_2}$ with weights $|n_{\f{d}_1}|$ and $|n_{\f{d}_2}|$, respectively, until reaching the boundaries of the walls $\f{d}_1$ and $\f{d}_2$.   The balancing condition at $p$ follows easily from \eqref{Parents} and the fact that commutators in $\f{g}$ respect the $N^+$-grading.  The process is repeated for each of these branches, and continues until every branch extends to infinity in some leaf.  This gives the desired tropical curve.  The formulas for $g_{\f{d}}$, and $m_{\f{d}}$ follow easily from \eqref{Parents} and the definitions of $g_{\Gamma}$ and $m_{\Gamma}$, noting that the $\prod w_{ij}!$ factor appears because of the fact that $g_{iw}$ is multiplied by $w!$ in the definition of $\f{D}_k^0$ in \eqref{PertIn}, and similarly for the $u_{\JJ}$ factor.
\end{proof}

We now check that $\f{D}_k^{\infty}$ is consistent.  The wall-crossing automorphisms $\theta_{\f{d}_1}$ and $\theta_{\f{d}_2}$ commute for $u_{\JJ_{\f{d}_1}} u_{\JJ_{\f{d}_2}} =0$, so joints arising as the intersections of such pairs of walls will be consistent.  Also, Lemma \ref{PentagonScatter} and \eqref{Parents} ensure consistency around joints arising as intersections of pairs of walls $\f{d}_1,\f{d}_2$ satisfying (i)-(iii) above for some $l$.  The only remaining joints are those equal to the boundary of a wall $\f{d}(\f{d}_1,\f{d}_2)$ as in \eqref{Parents}.  Consider such a joint $\f{j}$.  Using Lemma \ref{ScatTropBij}, consistency about $\f{j}$ is equivalent to checking that the tropical counts $\N^{\trop}_{\ww}(n_0; m)$ (for $m^{\perp}$ parallel to the supports of the walls with boundary containing $\f{j}$ and $n_0\in N_{\bb{R}}\setminus m^{\perp}$) are at least invariant as we translate the two-dimensional wall $B_{\out}$ (i.e., from one side of $\f{j}$ to the other side of $\f{j}$).  This is easily checked using the same techniques as in \S \ref{CPSproof} below.

Thus, we have $\f{D}_k^{\infty}= \scat(\f{D}_k^0)$, as desired.  Hence,
\begin{align*}
    \f{D}_{T_k}=\pi_*(\f{D}_k^{\infty})_{\as},
\end{align*}
where the $\pi_*$ means that we apply the homomorphism $\pi:\wt{T}_k\rar T'_k$ of \eqref{pi} to each $g_{\f{d}}$.

\subsection{Proofs of the main theorems}\label{MainProofs}

We will need a certain formula for relating the $t$- and $\?{u}$-variables.  For a weight vector $\ww$ as above, let $|\ww_i|:=\sum_{j=1}^{l_i} w_{ij}$, and let $t^{\ww}=\prod_{i,j}t_i^{w_{ij}} = \prod_i t_i^{|\ww_i|}$.  Note that
\begin{align}\label{tww}
t^{|\ww_i|}= |\ww_i|! \sum_{\substack{J_i\subset \{1,\ldots,k\} \\ \#J_i=|\ww_i|}}  \?{u}_{iJ_i}.
\end{align}

Given $\JJ\in \JJ_{\ww}$, let $J_i=\bigcup_j J_{ij}$.  Note that given the sets $J_i$, there are $\prod_i \frac{|\ww_i|!}{\prod_j w_{ij}!}$ possible refinements into the sets $J_{ij}$.  We thus find
\begin{align}\label{sum_uiJ}
    \sum_{\JJ\in \JJ_{\ww}} \?{u}_{I_{\JJ}} = \prod_{i\in I}\left(\prod_{j=1}^{l_i} \left(\frac{1}{w_{ij}!}\right) \sum_{\substack{J_i\subset \{1,\ldots,k\} \\ \#J_i=|\ww_i|}} |\ww_i|!\?{u}_{iJ_i}\right).
\end{align}
Now combining \eqref{tww} and \eqref{sum_uiJ} yields
\begin{align}\label{tww_sigma}
    t^{\ww} =  \sum_{\JJ\in \JJ_{\ww}}\left(\?{u}_{I_{\JJ}} \prod_{i,j} w_{ij}!\right).
\end{align}

\subsubsection{Proof of Theorem \ref{ScatTropDisks}}

Fix $n$, $m$, and $n_0$ as in the statement of the theorem.  Let $\f{D}_k^{\infty}(n,m)$ be the set of walls in $\f{D}_k^{\infty}$ of the form $(m_{\f{d}},\f{d},g_{\f{d}})$ with $m_{\f{d}}\in \bb{Z} m$ and $g_{\f{d}}\in \f{g}_{n}^{\parallel}$.

Recall that each $\JJ\in \JJ_{\ww}$ determines a set of walls $\f{D}_{k,\JJ}^0$, and in the reverse direction, each $\f{D}_{k,\JJ}^0$ determines an orbit of $\Aut(\ww)$ in $\JJ_{\ww}$.  Similarly, $u_{\JJ}$ uniquely determines and is determined by an orbit of $\Aut(\ww)$ in $\JJ_{\ww}$.  We see that the sum from the left-hand side of \eqref{ScatTropDisksFormula}, with $\f{D}(n,m)$ there replaced by $\f{D}_k^{\infty}(n,m)$, is equal to
\begin{align}\label{sumleftside}
\sum_{\substack{k>0  \\ \ww \in \s{W}(kn)}} \sum_{\JJ\in \JJ_{\ww}/\Aut(\ww)} \left( \sum_{\substack{\f{d}\in \f{D}_{k,\JJ}^{\infty} \\ m_{\f{d}} \in \bb{R} m}} \sign(m_{\f{d}}/m) g_{\f{d}}\right)u_{\JJ}.
\end{align}
Applying Lemma \ref{ScatTropBij}, this becomes
\begin{align}\label{uij}
    \sum_{\substack{k>0  \\ \ww \in \s{W}(kn)}} \sum_{\JJ\in \JJ_{\ww}/\Aut(\ww)}\left( \sum_{\substack{\Gamma \in \f{T}_{\Delta_{\ww}}(\A_{\ww,n_0,\JJ}) \\ m_{\Gamma} \in \bb{R} m}} \sign(m_{\Gamma}/m) g_{\Gamma}  \prod_{(i,j)\in I_{\JJ}} (w_{ij}!)\right)u_{\JJ}.
\end{align}
Now, note that for each $\ww$, $(\f{D}_k^0)_{\as}$ is symmetric with respect to permuting the elements of $\JJ_{\ww}$, i.e., for $\JJ_1,\JJ_2\in \JJ_{\ww}$, swapping the supports of $\f{d}_{iw\JJ_1}$ and $\f{d}_{iw\JJ_2}$ in \eqref{PertIn} does not affect $(\f{D}_k^0)_{\as}$.  Hence, the sum in the large parentheses of \eqref{sumleftside} must be independent of $\JJ$, so we can write $\f{T}_{\Delta_{\ww}}(\A_{\ww,n_0})$ in place of $\f{T}_{\Delta_{\ww}}(\A_{\ww,n_0,\JJ})$ in \eqref{uij}.\footnote{Here it is important that we are using $u_{\JJ}$ instead of $\?{u}_{\JJ}$.  We note that this step is really what gives us the invariance of $N_{\ww}^{\trop}(n_0,m)$ in Proposition \ref{TropInvariant}.}  Now, pulling the quotient by $\Aut(\ww)$ into the sum, applying $\pi:\wt{T}_k\rar T'_k$ as in \eqref{pi}, and utilizing \eqref{tww_sigma}, the expression \eqref{uij} becomes
\begin{align*}
\sum_{\substack{k>0  \\ \ww \in \s{W}(kn)}} \sum_{\substack{\Gamma \in \f{T}_{\Delta_{\ww}}(\A_{\ww,n_0})\\ m_{\Gamma} \in \bb{R}m}} \left(\frac{1}{|\Aut(\ww)|}\right) \sign(m_{\Gamma}/m) g_{\Gamma}  t^{\ww}.
\end{align*}
The claim follows.

\qed

\subsubsection{Proof of Theorem \ref{MainTheorem}}\label{MainProof}

Fix $r\in K\cap P$ and let $n=\varphi(p)+r$.  We wish to describe the degree $n$ part (for the $P$-grading) of $\vartheta_{p_1,Q}\cdots \vartheta_{p_s,Q}:=\vartheta_{p_1,Q}^{\f{D}_k^{\infty}}\cdots \vartheta_{p_s,Q}^{\f{D}_k^{\infty}}$, in terms of tropical curve counts (using $z^{p_i}\otimes 1\in A\otimes \wt{T}_K$ as our initial monomials).  We can assume that $Q$ is far enough from the origin for the degree $n$ part of the product over $\f{D}_k^{\infty}$ to agree with that over $(\f{D}_k^{\infty})_{\as} =\scat_k(\f{D}_{\In})$.

Specifically, we want to show that the degree $n$ part of $\vartheta_{p_1,Q}\cdots \vartheta_{p_s,Q}\in  A\otimes \wt{T}_k$ is
\begin{align}\label{Mainu}
\sum_{\ww\in \s{W}_{\pp}(n)} \left[\frac{\N^{\trop}_{\ww,\pp}(Q)}{|\Aut(\ww)|} \sum_{\JJ\in \JJ_{\ww}}\left(u_{\JJ} \prod_{i,j} w_{ij}!\right) \right]. 
\end{align}
Then applying $\pi$ and using \eqref{tww_sigma}, this becomes
\begin{align*}
\sum_{\ww\in \s{W}_{\pp}(n)}  \frac{\N^{\trop}_{\ww,\pp}(Q)}{|\Aut(\ww)|}t^{\ww},
\end{align*}
and setting $t_i=1$ for each $i$ yields the desired result.

Consider a collection of broken lines $\{\gamma_k\}_k$ for $\f{D}_{k}^{\infty}$ contributing non-trivially to $\vartheta_{p_1,Q}\cdots \vartheta_{p_s,Q}$ as in \eqref{alphaform}.  For any wall $\f{d}\in \f{D}_{k}^{\infty}$ along which some $\gamma_k$ bends at a point $Q_{\f{d}}$, we glue to $\gamma_k$ the truncation $h_{Q_{\f{d}}}$ at $Q_{\f{d}}$ of a tropical curve from Lemma \ref{ScatTropBij}, a so-called Maslov index $0$ tropical disk.  Note that  $h_{Q_{\f{d}}}$ together with $\gamma_k$ (weighted by the indexes of the degrees of the attached elements of $\f{g}$ and $A$) satisfies the balancing condition at $Q_{\f{d}}$, so repeating this for every bend of $\gamma_k$ results in a tropical disk $h_{\gamma_k}$ with $1$-valent point at $Q$.  One then concatenates these tropical disks $h_{\gamma_k}$ at $Q$ for each $k=1,\ldots,s$ to obtain a tropical disk in $\f{T}'_{\Delta_{\ww,\pp}}(\A_{\ww,\pp,Q,\JJ},s-2)$ for some $\ww$ and $\JJ\in \JJ_{\ww}$.  By design (and using Lemma \ref{SimpleBreak} and  \eqref{BreakCoeff}), the corresponding product of final elements $a_{\gamma_1} \cdots a_{\gamma_s}$ as in \eqref{alphaform}  is precisely $\Mult(h_{\gamma_i})$, times a factor of $\prod_{ij} w_{ij}!$ as in Lemma \ref{ScatTropBij}.  See Figure \ref{BLTC} for an example. 

\begin{figure}[htb]
\def\svgwidth{160pt}
\begingroup%
  \makeatletter%
  \providecommand\color[2][]{%
    \errmessage{(Inkscape) Color is used for the text in Inkscape, but the package 'color.sty' is not loaded}%
    \renewcommand\color[2][]{}%
  }%
  \providecommand\transparent[1]{%
    \errmessage{(Inkscape) Transparency is used (non-zero) for the text in Inkscape, but the package 'transparent.sty' is not loaded}%
    \renewcommand\transparent[1]{}%
  }%
  \providecommand\rotatebox[2]{#2}%
  \newcommand*\fsize{\dimexpr\f@size pt\relax}%
  \newcommand*\lineheight[1]{\fontsize{\fsize}{#1\fsize}\selectfont}%
  \ifx\svgwidth\undefined%
    \setlength{\unitlength}{416.82762645bp}%
    \ifx\svgscale\undefined%
      \relax%
    \else%
      \setlength{\unitlength}{\unitlength * \real{\svgscale}}%
    \fi%
  \else%
    \setlength{\unitlength}{\svgwidth}%
  \fi%
  \global\let\svgwidth\undefined%
  \global\let\svgscale\undefined%
  \makeatother%
  \begin{picture}(1,0.9021422)%
    \lineheight{1}%
    \setlength\tabcolsep{0pt}%
    \put(0,0){\includegraphics[width=\unitlength,page=1]{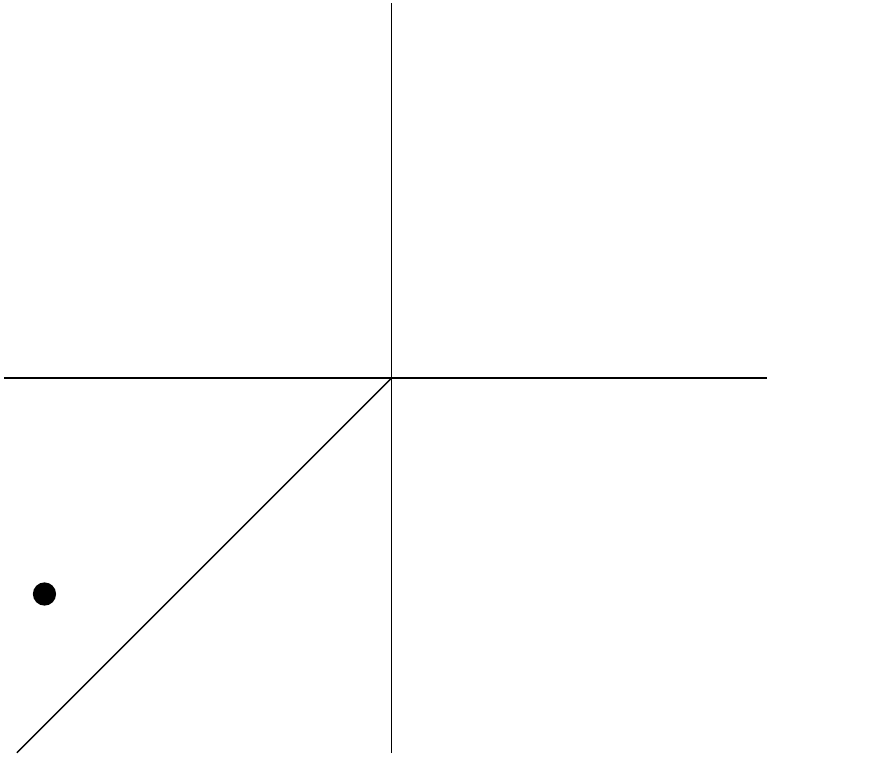}}%
    \put(0.86471374,0.53881845){\color[rgb]{0,0,0}\makebox(0,0)[lt]{\begin{minipage}{0.12595134\unitlength}\raggedright \end{minipage}}}%
    \put(0,0){\includegraphics[width=\unitlength,page=2]{BrokenLineTropicalCurve.pdf}}%
    \put(0.46865696,0.8802694){\color[rgb]{0,0,0}\makebox(0,0)[lt]{\lineheight{1.25}\smash{\begin{tabular}[t]{l}$\f{d}_2$\end{tabular}}}}%
    \put(-0.0123897,0.00078956){\color[rgb]{0,0,0}\makebox(0,0)[lt]{\lineheight{1.25}\smash{\begin{tabular}[t]{l}$\f{d}_3$\end{tabular}}}}%
    \put(0.00220261,0.15893351){\color[rgb]{0,0,0}\makebox(0,0)[lt]{\lineheight{1.25}\smash{\begin{tabular}[t]{l}$Q$\end{tabular}}}}%
    \put(0,0){\includegraphics[width=\unitlength,page=3]{BrokenLineTropicalCurve.pdf}}%
    \put(0.85823446,0.48567794){\color[rgb]{0,0,0}\makebox(0,0)[lt]{\lineheight{1.25}\smash{\begin{tabular}[t]{l}$\f{d}_1$\end{tabular}}}}%
    \put(0,0){\includegraphics[width=\unitlength,page=1]{BrokenLineTropicalCurve.pdf}}%
  \end{picture}%
\endgroup%
    \caption{A consistent scattering diagram over $\f{g}^{\omega}_q \otimes \wt{T}_1$ with $g_{\f{d}_1} = u_{11}z^{e_1}$, $g_{\f{d}_2} = u_{21}z^{e_2}$, and $g_{\f{d}_3}=[g_{\f{d}_1},g_{\f{d}_2}] = u_{11}u_{21}(q-q^{-1})z^{e_1+e_2}$.  The solid lines (both bold and unbold) are the supports of the walls.  The bold dashed lines are a pair of broken lines (one without any bends) contributing to the product $\vartheta_{e_1,Q}\vartheta_{e_2,Q}$.  The bold lines (dashed and undashed) form the tropical disk $\Gamma$ (which has weight $\ww=((1),(1))$) corresponding to this pair of broken lines.  The contribution of this pair of broken lines to the product is given by $[z^{e_1},[g_{\f{d}_1},g_{\f{d}_2}]]z^{e_2} = u_{11}u_{21}q^2(q-q^{-1})^2 z^{2e_1+2e_2}$.  If we view  $\Gamma$ as being in $\f{T}'_{\Delta_{\ww,\pp}}(\A_{\ww,\pp,Q},s-2)$ (using $\f{d}_1$ and $\f{d}_2$ as the incidence conditions for the legs they contain), then the corresponding contribution to $N_{\ww,\pp}^{\trop}(Q)$  is $\Mult(\Gamma)=[z^{e_1},[z^{e_1},z^{e_2}]]z^{e_2}=q^2(q-q^{-1})^2 z^{2e_1+2e_2}$.  Using Example \ref{MultExamples}(iii),  the coefficient $q^2(q-q^{-1})^2$ can be computed as a product of vertex-multiplicities: $\Mult_q(Q)=q^{\{2e_1+e_2,e_2\}}=q^2$, while the other two vertices, moving from top-right to bottom left, have multiplicities $[|\{e_1,e_2\}|]_q = [1]_q=q-q^{-1}$ and $[|\{e_1,e_1+e_2\}|]_q = [1]_q = q-q^{-1}$. \label{BLTC}}
\end{figure}

Now, for each $\ww\in \s{W}_{\pp}(n)$ and each $\JJ\in \JJ_{\ww}$, we can apply the above computation to all broken lines for $\f{D}_k^{\infty}$ whose corresponding tropical disk lives in $\f{T}'_{\Delta_{\ww,\pp}}(\A_{\ww,\pp,Q,\JJ},s-2)$.  Summing over all $\JJ\in \JJ_{\ww}$ and applying the same tricks as in the above proof of Theorem \ref{ScatTropDisks} (e.g., noting that for each $\ww$, the result must be symmetric with respect to permuting the $\JJ$'s in $\JJ_{\ww}$), one finds that the sum of the final monomials of all the relevant broken lines indeed yields \eqref{Mainu}.  

\qed

See Figure \ref{BLTC} for an example of the above construction with $k=1$.  We note that the complexity of the scattering diagram does increase quickly as soon as the $k$ in $\wt{T}_k$ is increased.  See \cite[Figure 1.3]{GPS} for an illustration when $k=2$.

\subsubsection{Proof of Theorem \ref{CPS} (the refined \cite{CPS} result)}\label{CPSproof}

We want to show that $\vartheta_{p,Q}$ for different generic values of $Q$ are related by path-ordered products.  Note that it suffices to prove this for the scattering diagrams $\f{D}_{k}^{\infty}$ as described in Lemma \ref{ScatTropBij}.  Recall from the proof of Theorem \ref{MainTheorem} in \S \ref{MainProof} above that the broken lines contributing to $\vartheta_{p,Q}$ (for the scattering diagram $\f{D}_{k}^{\infty}$) correspond bijectively to tropical disks in $\f{T}'_{\Delta_{\ww,p}}(\A_{\ww,p,Q,\JJ},-1)$ for some $\ww$ and some $\Aut(\ww)$-orbit of $\JJ$ in $\JJ_{\ww}$.  Furthermore, for a broken line $\gamma$ and corresponding tropical disk $h_{\gamma}$, we have $a_{\gamma}=\Mult(h_{\gamma})$. As we move $Q$, there are two issues that could result in changes to the types of broken lines contributing to $\vartheta_{p,Q}$.  The most obvious is that $Q$ may move across a wall $\f{d}$ of $\f{D}_{k}^{\infty}$, resulting in the possible gluing or losing of a Maslov index $0$ tropical disk associated to the wall.  By Lemma \ref{ScatTropBij} and Lemma \ref{SimpleBreak}, the resulting changes to the tropical disk counts are exactly accounted for by the wall-crossing automorphisms.

There is one other way that moving $Q$ might affect the types of tropical disks being enumerated.  Namely, as we translate $Q$ and correspondingly deform $\Gamma\in \f{T}'_{\Delta_{\ww,p}}(\A_{\ww,p,Q,\JJ},-1)$, an edge of $\Gamma$ might collapse to have length $0$, resulting in a $4$-valent vertex.  Let $Q_0$ be a point for which some $\Gamma_0$ in $\f{T}'_{\Delta_{\ww,p}}(\A_{\ww,p,Q,\JJ},-1)$ has a $4$-valent vertex, and assume $Q_0$ is generic among such points.  Then there is some neighborhood $U$ of $Q_0$ and some affine hyperplane $H$ containing $Q_0$ such that, for each $Q\in H\cap U$, there is a unique tropical disk of the same type as $\Gamma_0$ in $\f{T}'_{\Delta_{\ww,p}}(\A_{\ww,p,Q,\JJ},-1)$.

Let $E_1,E_2,E_3$ be the edges flowing into the $4$-valent vertex $V$ for $\Gamma_0$ as above, and let $E_4$ be the outward-flowing edge, flowing towards $Q_{\out}$, cf. Figure \ref{TropWall}(b).  For $Q$ in one component of $U\setminus (H\cap U)$, there is exactly one way to extend the $4$-valent vertex into a compact edge to yield a tropical disk $\Gamma \in \f{T}'_{\Delta_{\ww,p}}(\A_{\ww,p,Q,\JJ},-1)$, say, with $E_2$ and $E_3$ meeting first, cf. Figure \ref{TropWall}(a).  For $Q$ on the other side of $U\setminus (H\cap U)$, there are either one or two ways to insert a compact edge yielding tropical disks in $\f{T}'_{\Delta_{\ww,p}}(\A_{\ww,p,Q,\JJ},-1)$, either with $E_1$ and $E_2$ meeting first, or with $E_1$ and $E_3$ meeting first, cf. Figure \ref{TropWall}(c)(d).  We wish to show first that if one of the two tropical curve types of Figure \ref{TropWall}(c)(d) does not occur, then the tropical multiplicity associated to that type is $0$.  We will then show that the multiplicity associated to the tropical curve type of Figure \ref{TropWall}(a) is the sum of the multiplicities associated to the tropical curve types of Figure \ref{TropWall}(c)(d).\footnote{We note this strategy for proving invariance of tropical counts was first employed in the genus $0$ cases of \cite{GM}.}  See Figure \ref{BJ1} for an illustration of how this wall-crossing the tropical moduli space arises as a result of a broken line crossing a joint of $\f{D}_k^{\infty}$.

\begin{figure}[htb] 
    \begin{tabular}{c c c c}
    \def\svgwidth{90pt}
    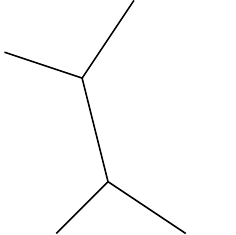
    & \def\svgwidth{80pt} 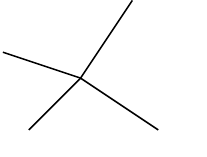 
    &  \def\svgwidth{120pt} 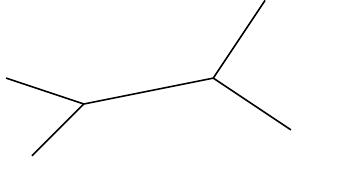 
    &  \def\svgwidth{84pt} 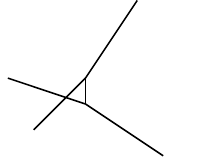 \\ \\
		(a) & (b)  & (c)  & (d)
    \end{tabular}
\caption{Tropical wall crossing.  Locally in the space of choices for the incidence conditions, there is a codimensions one ``wall'' of non-generic choices resulting in a $4$-valent vertex as in (b).  One one side of this wall, there is a single tropical curve type (a) satisfying the deformed conditions.  On the other side, there are up to two types (c) and (d).\label{TropWall}}
\end{figure}

\begin{figure}[htb]
    \begin{tabular}{c c c c}
\def\svgwidth{90pt}
    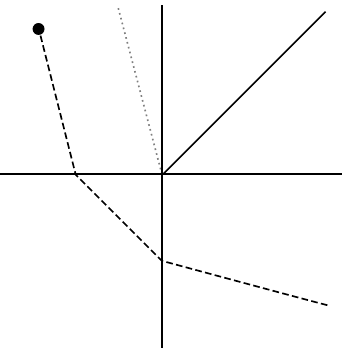 
    &\def\svgwidth{90 pt}
    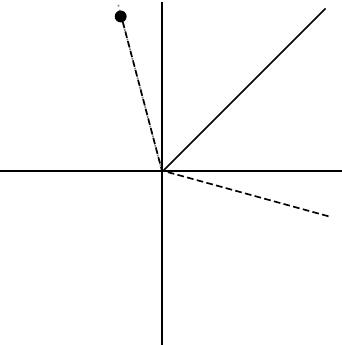 
    & \def\svgwidth{90pt} 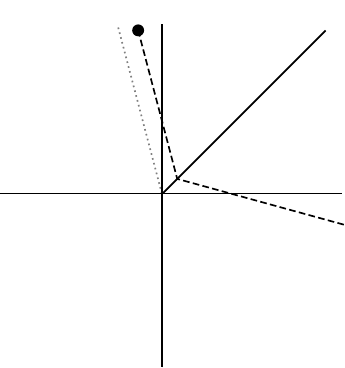
    &  \def\svgwidth{90 pt} 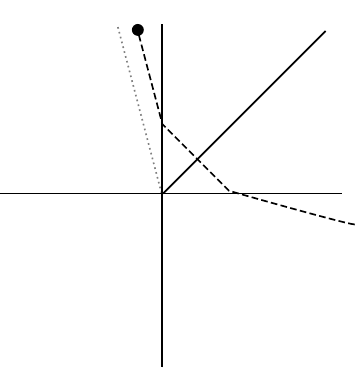 \\ \\
    \def\svgwidth{90pt}
    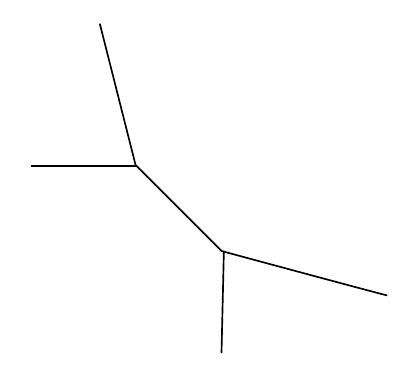
    & \def\svgwidth{90pt} 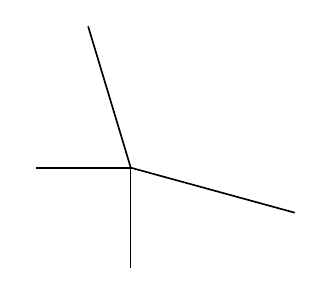 
    &\def\svgwidth{90pt}
    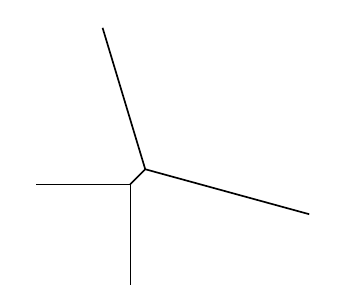
    &  \def\svgwidth{90pt} 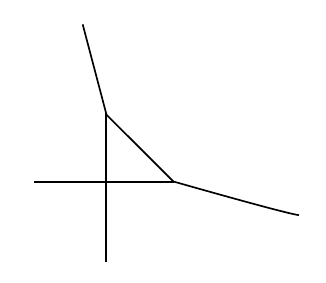 \\ \\
    (a)  & (b) &  (c) & (d)
    \end{tabular}
            \caption{Broken lines (the dashed segments) moving past a joint of a scattering diagram (the solid rays), and the corresponding transition in tropical disk types.  When $Q$ moves onto a certain local hyperplane $H$ (the opaque dotted ray), broken lines $\gamma$ ending at $Q$ collide with a joint (b, top), resulting in a tropical disk with a $4$-valent vertex (b, bottom).  For $Q$ on one side of $H$, there is one possibility for the additional straight segment of $\gamma$ (a, top), resulting in one tropical disk type (a, bottom).  On the other side of $H$, there are up to two possibilities for the new edge of the broken line (c, top) and (d, top), resulting in two corresponding tropical disk types (c, bottom) and (d, bottom), respectively.  Note that the bottom row here corresponds to the tropical disks of Figure \ref{TropWall}. \label{BJ1}}
\end{figure}

The situation in which one of the two types from Figure \ref{TropWall}(c)(d), say, the type from (d), does not occur arises under the following circumstances: consider the four tropical disk-types associated to the connected components of the tropical curve Figure \ref{TropWall}(b) with its vertex removed.  The incidence conditions on these components force each $E_i$ to live in some affine space $B_i$.  Then the tropical curve type from (d) does not occur if either $B_1$ and $B_3$ are not transverse, or $B_2$ and $B_4$ are not transverse.

For $i=1,2,3$, if $E_i$ has an element $g_{E_i}\in \f{g}^{\parallel}_{n_{E_i},m_{E_i}}$ associated to it in the definition of $\Mult(\Gamma)$, then $B_i=m_{E_i}^{\perp}$, while if $E_i$ has an element of $A$ associated to it, then $B_i$ is all of $N_{\bb{R}}$.  So the spaces $B_1$ and $B_3$ will automatically be transverse if either $E_1$ or $E_3$ has an element of $A$ associated to it.  On the other hand, if $B_1$ and $B_3$ are each associated with elements of $\f{g}^{\parallel}_{n_{E_i},m_{E_i}}$, then $B_1$ and $B_3$ will only fail to be transverse if $m_{E_1}$ and $m_{E_3}$ are parallel.  But then $n_{E_1}$ and $n_{E_3}$ are both contained in $m_{E_1}^{\perp}=m_{E_3}^{\perp}$, and so since $g_{E_i}\in \f{g}_{n_{E_i},m_{E_i}}^{\parallel}$, we have $[g_{E_1},g_{E_3}]=0$.  So then this missing type has multiplicity $0$ and does not affect the counts.

Now suppose that $B_2$ and $B_4$ fail to be transverse.  Let $E_0$ denote the compact edge in Figure \ref{TropWall}(d).  As above, it must be the case that $B_2$ has an element of $\f{g}_{n_{E_2},m_{E_2}}$ associated to it, not an element of $A$, and so $B_2$ is parallel to $m_{E_2}^{\perp}$.  On the other hand, $B_4$ is parallel to $\bb{R}n_{E_4}$, and so non-transversailty means $n_{E_4}\in m_{E_2}^{\perp}$.  But then the balancing condition forces $n_{E_0} \in m_{E_2^{\perp}}$ as well.  Since $g_{E_2} \in \f{g}_{n_{E_2},m_{E_2}}^{\parallel}$, we now have $g_{E_2}\cdot a_{g_{E_0}}=0$.  Thus, these missing tropical disk types have multiplicity $0$, as desired.

So now we may indeed assume that each of the $3$ possible tropical types of Figure \ref{TropWall}(a,c,d) occurs near the wall.  For convenience, let us now view the elements attached to the edges of $\Gamma$ not as living in $\f{g}$ or $A$, but instead as living in $\f{g}\oplus A$, always denoting the element associated to an edge $E$ by $g_E$.

For the side of $H$ associated to (a), the element $g_{E_4}\in \f{g}\oplus A$ is, up to sign, given by $[g_{E_1},[g_{E_2},g_{E_3}]]$.  For the other side of the wall, the $g_{E_4}$'s corresponding to the two possible types Figure \ref{TropWall} (c) and (d) are, up to signs, $[[g_{E_1},g_{E_2}],g_{E_3}]$ and $[g_{E_2},[g_{E_1},g_{E_3}]]$, respectively.  So equality of the tropical counts on the two sides of $H$ comes down to checking that
\begin{align}\label{Jacobi}
    \pm [g_{E_1},[g_{E_2},g_{E_3}]] = \pm [[g_{E_1},g_{E_2}],g_{E_3}] \pm [g_{E_2},[g_{E_1},g_{E_3}]],
\end{align}
where the signs have yet to be addressed.  If we can show that the signs of nonzero terms in \eqref{Jacobi} are either all positive or all negative, then the equality follows from the Jacobi identity.  We may of course assume that the terms of \eqref{Jacobi} are not all $0$, since this case is trivial.

Note that the signs in \eqref{Jacobi} are independent of the specific choice of $\f{g}$ and $A$, instead being determined entirely by the vectors $m_E$ and $n_E$ associated to the edges.  Thus, it suffices to check the case of the tropical vertex group $\f{h}$ as in Example \ref{gegs}(i).  In this case, Theorem \ref{CPS} is known to hold by \cite{CPS}, so all the signs of nonzero terms in \eqref{Jacobi} must be the same.

Now, in the tropical vertex group setting, for $i_1,i_2,i_3$ the distinct elements of $\{1,2,3\}$ in some order, we have that $[g_{E_{i_1}},[g_{E_{i_2}},g_{E_{i_3}}]]$ is nonzero if and only if $B_{i_2}$ and $B_{i_3}$ are transverse and $B_{i_1}$ and $B_4$ are transverse (here we use the assumption that not all terms of \eqref{Jacobi} are $0$ to ensure that the vanishing of powers of the $u_{ij}$'s does not cause $0$ multiplicity).  Furthermore, as we saw in our transversality arguments above, non-transversality of $B_{i_2}$ and $B_{i_3}$ or of $B_{i_1}$ and $B_4$ forces the bracket to be $0$ for any choice of $\f{g}$ and $A$.  Thus, the signs of all nonzero terms in \eqref{Jacobi} agreeing in the tropical vertex group setting is sufficient.  This completes the proof.

\qed

~

We note that the above proof used the fact that Theorem \ref{CPS} is known to hold over the tropical vertex group, but this can be avoided, either by tediously checking the signs of \eqref{Jacobi} in several different cases, or by using the results of \cite{MRudMult} to relate the multiplicities in the tropical vertex group setting to tropical Gromov-Witten counts that are known to be invariant.

\section{Cluster varieties and Frobenius maps}\label{ClusterSection}

In this section we briefly explain how to get the initial scattering diagrams used for constructing theta functions on cluster varieties, including both the classical and quantum versions.  We then use Theorem \ref{MainTheorem} to prove Fock and Goncharov's conjectures \cite[\S 4]{FG1} on symmetries of theta functions with respect to certain Frobenius automorphisms (not to be confused with Gross-Hacking-Keel's Frobenius structure conjecture).

\subsection{Seeds}\label{SeedsSection}

As in \cite[\S 1.2]{FG1}, a seed is a collection of data \begin{align}\label{S}
    S=\{L,I,E:=\{e_{i}\}_{i\in I},F,\{\cdot,\cdot\},\{d_{i}\}_{i\in I}\},
\end{align} where $L$ is a finitely generated free Abelian group, $I$ is a finite index set, $E$ is a basis for $N$ indexed by $I$, $F$ is a subset of $I$,  $\{\cdot,\cdot\}$ is a skew-symmetric $\bb{Q}$-valued bilinear form, and the $d_{i}$'s are positive rational numbers such that $d_j\{e_i,e_j\}$ is in $\bb{Z}$ whenever $i$ and $j$ are not both in $F$.  One considers the bilinear form $(\cdot,\cdot)$ defined by
\begin{align*}
    (e_{i},e_{j}) := d_j\{e_i,e_j \}.
\end{align*}
  One calls $e_i$ a frozen vector if $i \in F$.  We let $\pi_1$ and $\pi_2$ be the maps $L\rar L^*$ defined by $n\mapsto (n,\cdot)$ and $n\mapsto (\cdot,n)$, respectively.  The reader should notice the resemblance of this setup to that of Examples \ref{InExamples}.  If the seed $S$ is not clear from context, we may write the data with subscripts $S$ to clarify, e.g., $S=\{L_S,I_S,E_S=\{e_{S,i}\},F_S,\{\cdot,\cdot\}_S,\{d_{S,i}\}\}$.

Given $S$ as above, the Langland's dual seed $S^{\vee}$ has the same $L$, $I$, $E$, and $F$ as $S$, but $\{\cdot,\cdot\}$ is replaced with the form $\{\cdot,\cdot\}^{\vee}$ defined by $\{e_i,e_j\}^{\vee}:=d_id_j\{e_i,e_j\}$, and for each $i\in I$, $d_i$ is replaced by $d_i^{\vee}:=\frac{1}{d_i}$.  The main effect of this is that the form $(\cdot,\cdot)^{\vee}$ defined by $(e_i,e_j)^{\vee}=d_j^{\vee}\{e_i,e_j\}^{\vee}$ is the negative transpose of $(\cdot,\cdot)$, so $\pi_1^{\vee}=-\pi_2$ and $\pi_2^{\vee}=-\pi_1$.

We refer to \cite[\S 1.2]{FG1} for the definitions of the spaces $\s{A}$ and $\s{X}$ associated to the seed $S$.  For the quantum version $\s{X}_q$ of the $\s{X}$-space, cf. \cite[\S 3]{FG1}, and for the quantum version $\s{A}_q$ of $\s{A}$, cf. \cite{BZ} (alternatively, the reader may confer v2 of this article on arXiv).

Fix a seed $S$ as in \eqref{S}.  In the construction of the theta functions used in \cite{GHKK}, one  works not with $S$, but with the seed $S^{\prin}$ defined as follows:
\begin{itemize}[noitemsep]
\item $L_{S^{\prin}}:=L\oplus L^*$.
\item $I_{S^{\prin}}$ is the disjoint union of two copies of $I$.  We will call them $I_1$ and $I_2$ to distinguish between them.
\item $E_{S^{\prin}}:=\{(e_i,0)|i\in I_1\} \cup \{(0,e_i^*)|i \in I_2\}$
\item $F_{S^{\prin}}:=F_1\cup I_2$, where $F_1$ is $F$ viewed as a subset of $I_1$.
\item $\{ (n_1,m_1), (n_2,m_2)\}_{S^{\prin}} : = \{n_1,n_2\} + m_2(n_1) - m_1(n_2)$.
\item The $d_i$'s are the same as before (viewing $i$ in $I_1$ or $I_2$ as an element of $I$).
\end{itemize}

\subsection{The initial cluster scattering diagrams}\label{InClusterScat}

The theta functions in \cite{GHKK} are constructed first for $\s{A}^{\prin}$, and then certain restrictions of subsets of these theta functions yield the theta functions on $\s{A}$ and $\s{X}$ (cf. their Section 7.2).\footnote{In fact, since the theta functions are, in general, formal, they are more accurately defined only on various formal versions of these spaces.  We will ignore this issue here as it does not matter for our purposes.}  We will briefly give the initial scattering diagrams for directly constructing theta functions for $\s{X}$ and (if a ``compatible pair'' exists) for $\s{A}$.  Theta functions for $\s{A}^{\prin}$ can then be constructed by applying the $\s{A}$-case to the seed $S^{\prin}$.  Similarly, we will give the initial scattering diagrams for constructing the quantum theta functions on $\s{X}_q$ and $\s{A}_q$.

\subsubsection{Theta functions on $\s{X}$ and $\s{X}_q$}

The initial scattering diagram for constructing theta functions on $\s{X}$ is defined using Example \ref{InExamples}(ii) in the obvious way.  That is, we take $N=L$ with $E$, $I$, $F$, $\{\cdot,\cdot\}$, and $\{d_i\}$ as for the seed $S$.  Then, using the equivalence of Example \ref{equivD}(i), the resulting initial scattering diagram is
\begin{align*}
    \f{D}^{\s{X}}_{\In} &:= \{(\pi_2(e_i),\pi_2(e_i)^{\perp},\log(1+z^{e_i})\partial_{\pi_2(e_i)})\}_{i\in I\setminus F}.
    \end{align*}
We note a couple alternative ways to express this.  In terms of the Langland's dual seed $S^{\vee}$ and using the dilogarithm description of \eqref{Li2}, and applying the equivalence of Example \ref{equivD}(i) again, we can write the above scattering diagram as
\begin{align*}
    \f{D}^{\s{X}}_{\In} &= \{(\pi^{\vee}_1(e_i),\pi^{\vee}_1(e_i)^{\perp},-d_i\Li_2(-z^{e_i}))\}_{i\in I\setminus F}.
    \end{align*}
On the other hand, in terms of the version of scattering diagrams sketched in Remark \ref{RmkScatBar}(ii), we would write $\f{D}^{\s{X}}_{\In}$ as $\{(e_i,e_i^{\perp},\log(1+z^{e_i})\partial_{\pi_2(e_i)})\}_{i\in I\setminus F}$.

Similarly, the initial scattering diagram for the quantization $\s{X}_q$ is given as in \eqref{qIn} by
\begin{align*}
   \f{D}_{\In}^{\s{X}_q} := \{(\pi^{\vee}_1(e_i), \pi^{\vee}_1(e_i)^{\perp},-\Li_2(-z^{e_i};q^{1/d_i})\}.
\end{align*}
where we recall that $\Li_2(x;q):=\sum_{k=1}^{\infty} \frac{x^k}{k[k]_{q}}$ and $[k]_q:=q^k-q^{-k}$.
 We note that the construction of this quantum initial scattering diagram was outlined in \cite[arXiv v1, Construction 1.31]{GHKK}.

\subsubsection{Theta functions on $\s{A}$ and $\s{A}^{\prin}$, and on $\s{A}_q$ and $\s{A}_q^{\prin}$}
To construct the initial scattering diagram for $\s{A}$, we will use what \cite{BZ} calls a compatible pair, i.e., a skew-symmetric bilinear form $\Lambda$ on $L^*$ such that
\begin{align*}
\begin{array}{c l}
\Lambda(\pi_1(e_i),\cdot) = \frac{1}{d_i} e_i  &\mbox{ for each $i\in I\setminus F$.}
\end{array}
\end{align*}
(The other part of the ``pair'' is the data of the matrix $B$ for $(\cdot,\cdot)$ with respect to the basis $E$).  One sees that the existence of such a $\Lambda$ is equivalent to the condition that the restriction of $p_1$ to the span of $\{e_i\}_{i\in I\setminus F}$ is injective (called the ``Injectivity Assumption'' in \cite[\S 1]{GHKK}).  In particular, this is always the case for $S^{\prin}$ because $(\cdot,\cdot)_{\prin}$ is unimodular.

We now fix such a $\Lambda$, assuming one exists.  We then apply Example \ref{InExamples}(ii) to the data $N=L_S^*$, $I=I_S$, $F=F_S$, $E=\{\pi_1(e_{S,i})\}_{i\in I_S}$, $\omega=\Lambda$, and $d_i=d_{S,i}$ for each $i\in I$.  This yields the desired initial scattering diagram:
\begin{align*}
    \f{D}^{\s{A}}_{\In} = \{e_i,e_i^{\perp},\log(1+z^{\pi_1(e_i)})\partial_{e_i}\}.
\end{align*}

Similarly, the initial quantum scattering diagram is obtained by applying Example \ref{InExamples}(iii) to this data, thus yielding
\begin{align*}
    \f{D}^{\s{A}_q}_{\In} = \{e_i,e_i^{\perp},-\Li_2(-z^{\pi_1(e_i)};q^{1/d_i})\},
\end{align*}
Here, $-\Li_2(-z^{\pi_1(e_i)};q^{1/d_i})$ lives in the completion of the quantum torus algebra $\f{g}^{\Lambda}_q$ associated to $L^*$ and $\Lambda$ via the construction in Example \ref{gegs}(iii).

The initial scattering diagrams for $\s{A}^{\prin}$ and $\s{A}^{\prin}_q$ are constructed in the same way but using $S^{\prin}$ in place of $S$.

\subsection{The Frobenius maps}\label{Application}

Prior to the definition of the theta functions in \cite{GHKK}, \cite[\S4]{FG1} predicted their existence and conjectured several properties they should satisfy.  Among these properties are certain symmetries under a (quantum) Frobenius automorphism, predicted there for theta functions on the $\s{X}$-space, but proven here to also hold for the $\s{A}$-spaces.

First, we will need the following, which is little more than a restatement of \cite[Thm 1.13]{GHKK}.  
\begin{thm}[\cite{GHKK}, Thm 1.13]
Let $\f{D}_{\In}$ be an initial scattering diagram over a Poisson torus algebra as in \eqref{DInPoisson} (this includes each $\f{D}_{\In}^{\s{X}}$ and $\f{D}_{\In}^{\s{A}}$ of \S \ref{InClusterScat}).  Let $\f{D}:=\scat(\f{D}_{\In})$.  Then $\f{D}$ is equivalent to a scattering diagram $\f{D}'$ such that, for any wall $\f{d}\in \f{D}'$, and for any $u\in P$, crossing from the side of $\f{d}$ containing $u$ to the side not containing $u$ acts on $z^u$ via
\begin{align}\label{zu}
    z^u \mapsto z^u(1+z^n)^{cd_i|\{n,u\}|}
\end{align}
for some $n\in N^+$ and some positive integer $c$.  Consequently, every theta function constructed from broken lines for $\f{D}$ has non-negative integer coefficients.
\end{thm}

In particular, the integrality allows us to consider the coefficients modulo a prime $p$.  In \cite[\S 4.1, Equation 66]{FG1}, Fock and Goncharov predicted the $\s{X}$-space cases of the following theorem, which they called the Frobenius Conjecture:
\begin{thm}[Frobenius Conjecture, classical version]\label{FrobConjClass}
Consider $\f{D}_{\In}$ as in \eqref{DInPoisson} and $\f{D}=\scat(\f{D}_{\In})$. For any prime $p$ and any $u\in P$, the theta functions constructed from $\f{D}$ satisfy
\begin{align*}
    \vartheta_u^p \equiv \vartheta_{pu} \mbox{~(mod $p$)}.
\end{align*}
\end{thm}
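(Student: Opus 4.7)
The plan is to work in a single embedding $\wh{B} \hookrightarrow \kk(\!(\s{N})\!)$ at some fixed generic $Q$, reducing the claim to $\vartheta_{u,Q}^p \equiv \vartheta_{pu,Q} \pmod{p}$. The classical wall-crossing formula \eqref{ClassicWallCross}, $z^n \mapsto z^n(1+z^{f_i})^{d_i \WW(n, f_i)}$, produces only integer binomial coefficients at each bend, so the images of the classical theta functions lie in the formal series with $\bb{Z}$-coefficients and reduction mod $p$ is well-defined.

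First I would analyze the broken lines $\bar\gamma$ contributing to $\vartheta_{pu, Q}$ modulo $p$. Starting from the initial monomial $z^{pu}$, the first wall-crossing produces terms of
\[
z^{pu}(1+z^{f_i})^{d_i \WW(pu, f_i)} = z^{pu}\bigl((1+z^{f_i})^p\bigr)^{d_i \WW(u, f_i)} \equiv z^{pu}(1+z^{p f_i})^{d_i \WW(u, f_i)} \pmod{p};
\]
equivalently, $\binom{pN}{m} \equiv 0 \pmod p$ unless $p \mid m$, in which case $\binom{pN}{pk} \equiv \binom{N}{k} \pmod p$. Inductively, every mod-$p$-nontrivial attached monomial of $\bar\gamma$ takes the form $c_i z^{p v_i}$, and these $\bar\gamma$'s are in bijection with broken lines $\gamma$ for $\vartheta_{u,Q}$ --- same image in $\s{M}_{\bb{R}}$, same sequence of walls and binomial choices --- via the scaling $c_i z^{v_i} \leftrightarrow c_i z^{p v_i}$. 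Hence
\[
\vartheta_{pu, Q} \equiv \sum_\gamma c_\gamma z^{p n_\gamma} \pmod{p},
\]
summed over broken lines $\gamma$ with ends $(u, Q)$.

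Next I would apply the Frobenius identity $(a+b)^p \equiv a^p + b^p \pmod p$ termwise to the formal sum $\vartheta_{u,Q} = \sum_\gamma c_\gamma z^{n_\gamma}$. This is legal because only finitely many broken lines contribute to any fixed exponent modulo $k\s{I}$, so one can truncate and work in the polynomial quotient $B_k$. This yields $\vartheta_{u, Q}^p \equiv \sum_\gamma c_\gamma^p z^{p n_\gamma} \pmod p$, and Fermat's little theorem (each $c_\gamma \in \bb{Z}$) gives $c_\gamma^p \equiv c_\gamma \pmod p$, matching the expression for $\vartheta_{pu, Q}$ above.

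The main technical obstacle is pinning down the scaling bijection cleanly --- establishing that mod $p$ only ``$p$-scaled'' broken lines survive, that no two broken lines for $\vartheta_u$ scale to the same broken line for $\vartheta_{pu}$, and that the integer coefficients transfer correctly through each wall-crossing. This all boils down to the mod-$p$ congruence $(1+z^{f_i})^{pN} \equiv (1+z^{pf_i})^N$, which forces $p$-divisibility of each attached exponent at every bend; once handled carefully, the remaining arithmetic is routine.
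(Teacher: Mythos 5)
Your approach is essentially the same as the paper's: identify a bijection between broken lines for $\vartheta_{pu}$ and broken lines for $\vartheta_u$ mod $p$, use the freshman's dream to show that wall-crossing commutes with the $p$-scaling of attached monomials modulo $p$, and finish with Fermat's little theorem on the coefficients and a termwise freshman's dream applied to $\vartheta_u^p$.

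There is, however, one genuine gap in your argument as written. You ground the mod-$p$ analysis in the explicit initial wall-crossing formula \eqref{ClassicWallCross}, $z^n \mapsto z^n(1+z^{f_i})^{d_i\WW(n,f_i)}$, which is the wall function for the \emph{initial} walls $f_i^{\perp}$ of $\f{D}_{\In}$. But broken lines also bend at the outgoing walls that the scattering process appends to produce $\f{D} = \scat(\f{D}_{\In})$, and a priori those wall functions are not of the form $(1+z^f)^k$; they are general elements of $G_n^{\parallel}$. Your reduction $(1+z^{f_i})^{pN} \equiv (1+z^{pf_i})^N \pmod p$ and the resulting $p$-divisibility of attached exponents at each bend, and even the claim that the coefficients land in $\bb{Z}$ so that reduction mod $p$ makes sense, all depend on every wall the broken line crosses having this binomial form with integer exponent. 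The paper's proof resolves this by invoking \cite[Theorem 1.28]{GHKK}, which guarantees one can choose a representative of the equivalence class of $\f{D}$ in which every wall-crossing automorphism acts by $z^n \mapsto z^n(1+z^f)^k$ for some $f \in \s{N}$ and $k \in \bb{Z}_{\geq 0}$. Once that result is in hand, your argument goes through verbatim for arbitrary walls (the identity $(1+z^f)^{pN}\equiv(1+z^{pf})^N$ does not care that $f$ is a basis vector), so the fix is just to cite that theorem rather than pointing to the initial-wall formula.
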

\begin{proof}
We work with a representative $\f{D}'$ of the equivalence class of $\f{D}$ as in \eqref{zu}.  Consider broken lines with attached monomials $az^v$ and $az^{pv}$ ($a\in \bb{Z}$, $v\in P$) crossing a wall of $\f{D}'$ with associated wall-crossing automorphism $\nu$.  By \eqref{zu}, $\nu(az^v)=az^v(1+z^n)^k$ for some $n\in N^+$, $k\in \bb{Z}_{\geq 0}$.  Similarly, $\nu(az^{pv})=az^{pv}(1+z^n)^{pk}$.  By the freshman's dream and Fermat's little theorem, we see that $\nu(az^{pv}) \equiv \nu(az^v)^p$ (mod $p$).  It follows that the broken lines contributing to $\vartheta_{pu,Q}$ in characteristic $p$ are the same as the broken lines contributing to $\vartheta_{u,Q}$ in characteristic $p$, except that the attached monomials for broken lines contributing to $\vartheta_{pu,Q}$ are the $p$-th powers of the corresponding attached monomials for $\vartheta_{u,Q}$.  The result now follows by applying the freshman's dream to $\vartheta_u^p$.
\end{proof}

\cite{FG1} also predicted the following quantum version of the Frobenius Conjecture, their Conjecture 4.8.6.  First we introduce some notation.  Denote by $\vartheta_{u,Q}(z^n)=\sum c_nz^n\in \wh{A}=R_q\llb N^{\oplus}\rrb_P$ the Laurent series expansion of $\vartheta_{u,Q}$ in terms of monomials $z^n$, $n\in P$.  Then for $k\in \bb{Z}_{>0}$, denote $\vartheta_{u,Q}(z^{kn}):=\sum c_n z^{kn}$, the series obtained by multiplying each exponent by $k$.  When we want to specify that we are taking a certain limit for $q$, we will write this value in the subscript, as in $\vartheta_{u,Q,q}$.
\begin{thm}[Frobenius Conjecture, quantum version]\label{FrobConjQuant}
Consider theta functions with respect to $\f{D}=\scat(\f{D}_{\In})$ for $\f{D}_{\In}$ as in \eqref{qIn} (so this includes $\f{D}_{\In}$ equal to any $\f{D}_{\In}^{\s{X}_q}$ or $\f{D}_{\In}^{\s{A}_q}$).  Suppose $q$ and each $q^{1/d_i}$ are primitive $k$-th roots of unity for a positive odd integer $k$.  Then for any $u \in P$, we have
\begin{align*}
\vartheta_{ku,Q,q}(z^n) = \vartheta_{u,Q,1}(z^{kn})
\end{align*}
\end{thm}
The map $\vartheta_{u,Q,q}(z^n)\mapsto \vartheta_{u,Q,1}(z^{kn})$ is what \cite{FG1} calls the quantum Frobenius map.  The case of quantum cluster varieties from surfaces is \cite[Theorem 1.2.6]{AK}, assuming that their canonical bases turn out to equal the theta bases.  Since we do not have a version of \eqref{zu} in the quantum setting, the methods from the proof of Theorem \ref{FrobConjClass} will not be useful here.  Instead, we make use of Theorem \ref{MainTheorem}.
\begin{proof}
Consider a tropical disk making a nonzero contribution to \eqref{MainTrop} for $\vartheta_{ku,Q}$.  I.e., we consider a tropical disk $\Gamma$ contributing to some $\N^{\trop}_{\ww,\pp}(Q)$ in
\begin{align*}
\sum_{\ww\in \s{W}_{\pp}(ku)}  \frac{\N^{\trop}_{\ww,\pp}(Q)}{|\Aut(\ww)|}.
\end{align*}
  Let $\ww(\Gamma)$ denote the corresponding weight vector.  Using the description of $\Mult(\Gamma)$ given in \eqref{Multq}, we see that the contribution of $\Gamma$ is $z^{n_{\out}}$ times
\begin{align}\label{Contribution}
\left(\prod_{V\in \Gamma^{[0]}\setminus Q} [\Mult_{\Gamma}(V)]_q\right) \left(\prod_{w_{ij}\in \ww(\Gamma)} \frac{(-1)^{w_{ij}-1}}{w_{ij}[w_{ij}/d_i]_q} \right)\frac{1}{|\Aut(\ww(\Gamma))|}.
\end{align}
Here, each factor $\frac{(-1)^{w_{ij}-1}}{w_{ij}[w_{ij}/d_i]_q}$, which we will denote as $R_{w_{ij},d_i;q}$, arises as the $z^{w_{ij}e_i}$-coefficient in the quantum dilogarithm $-\Li_2(-z^{e_i};q^{1/d_i})$, so this product is the factor called $a_{\ww}$ in \eqref{Multq}.

The initial segment of the broken line corresponding to $\Gamma$ has weight a multiple of $k$.  We show by induction that the same is true for every edge of $\Gamma$.  Let $S$ be a maximal subset of $\Gamma\setminus Q_{\out}$ such that each edge $E\in S$ has weight a multiple of $k$ and the closure of $\Gamma\setminus S$ in $\Gamma$ is connected.  Suppose $S$ is not all of $\Gamma\setminus Q_{\out}$.  Then $S$ is a union of trees that each contain exactly $1$ univalent vertex, with the remainder of the vertices being trivalent.  To see this, note that there are no bivalent vertices in these trees because if two edges containing a vertex have weights a multiple of $k$, then the third does too.  Also, if there were more than one univalent vertex, then the closure of $\Gamma \setminus S$ would not be connected.  On the other hand, the vertex of a component of $S$ whose distance from $Q_{\out}$ is minimal must be univalant.

Now, the number of vertices of $S$ is equal to the number of undbounded edges in $S$.  Since $S$ contains the unbounded edge corresponding to the initial direction of the broken line, this means that $\Gamma$ has more vertices of multiplicity a multiple of $k$ than there are elements of $\ww(\Gamma)$ that are a multiple of $k$.  But for $\zeta$ a primitive $k$-th root of unity, $\lim_{q\rar \zeta} \frac{[a]_q}{[b]_q}=0$ if $a$ is a multiple of $k$ and $b$ is not, and the limit equals a finite nonzero number (see below) if both $a$ and $b$ are multiples of $k$.  Hence, the contribution of such a curve would be $0$.  So every edge of $\Gamma$ must have been weight a multiple of $k$.

We now see that a tropical curve contributes to $\vartheta_{ku,Q,q}$ if and only if it can be obtained by taking a tropical curve contributing to $\vartheta_{u,Q,1}$ and multiplying each weight by $k$.  This multiplication of each weight by $k$ takes each vertex multiplicity $[a]_q$ to $[k^2a]_q$, and each $R_{w_{ij},d_i;q}=\frac{(-1)^{w_{ij}-1}}{w_{ij}[w_{ij}/d_i]_q}$ to $R_{kw_{ij},d_i;q}=\frac{(-1)^{kw_{ij}-1}}{kw_{ij}[kw_{ij}/d_i]_q}$.  The number of trivalent vertices of $\Gamma$ is the same as the number of weights $w_{ij}$ in $\ww(\Gamma)$, so we can pair the trivalent vertices up with the $w_{ij}$'s and compute, for $\zeta$ a primitive $k$-th root of unity,
\begin{align*}
    \lim_{q\rar \zeta} [k^2a]_qR_{kw_{ij},d_i;q} &= \lim_{q\rar \zeta} \frac{(q^{k^2a}-q^{-k^2a})(-1)^{kw_{ij}-1}}{kw_{ij}(q^{kw_{ij}/d_i}-q^{-kw_{ij}/d_i})} \\
    &=\frac{(-1)^{kw_{ij}-1}}{kw_{ij}}\lim_{q\rar \zeta} q^{kw_{ij}/d_i-k^2a}\frac{(q^{2k^2a}-1)}{(q^{2kw_{ij}/d_i}-1)}.
\end{align*}
Since $q^{1/d_i}$ was also assumed to be a primitive $k$-th root of unity, $\lim_{q\rar \zeta} q^{kw_{ij}/d_i-k^2a} = 1$.  Using this and L'Hospital's rule, the above now further simplifies to
\begin{align*}
    \frac{(-1)^{kw_{ij}-1}}{kw_{ij}}\lim_{q\rar \zeta} \frac{2k^2aq^{2k^2a-1}}{(2kw_{ij}/d_i) q^{2kw_{ij}/d_i-1}} = 
    \frac{ad_i(-1)^{kw_{ij}-1}}{w_{ij}^2} = \frac{ad_i(-1)^{w_{ij}-1}}{w_{ij}^2},
\end{align*}
where the last equality used the assumption that $k$ is odd.  This is equal to $\lim_{q\rar 1} [a]_q R_{w_{ij},d_i;q}$, and the result follows from applying this to every such vertex-weight pair.
\end{proof}

\bibliographystyle{alpha}  
\bibliography{mandel}        
\index{Bibliography@\emph{Bibliography}}%

\end{document}